\documentclass[english,reqno]{amsart}

\usepackage{amsmath, appendix, ulem}
\usepackage[nobysame]{amsrefs}
\usepackage{amssymb, color}
\usepackage[dvipsnames]{xcolor}
\usepackage[margin=1in]{geometry}

\usepackage{mathrsfs}
\usepackage{graphicx}
\usepackage{subfig}
\usepackage{float}
\usepackage{epsf}
\usepackage{xcolor}
\usepackage{hyperref}
\usepackage{titletoc}

\graphicspath{{../Figures/}}

\numberwithin{equation}{section}

\newtheorem{lem}{Lemma}[section]
\newtheorem{thm}{Theorem}[section]

\theoremstyle{remark}
\newtheorem{rmk}{Remark}[section]

\renewcommand{\tilde}{\widetilde}
\renewcommand{\hat}{\widehat}
\renewcommand{\bar}{\overline}

\newcommand{\nn}{\nonumber}
\newcommand{\R}{{\mathbb R}}

\newcommand{\del}{\partial}

\newcommand{\dt}{ \, {\rm d} t}
\newcommand{\dx}{ \, {\rm d} x}
\newcommand{\dy}{ \, {\rm d} y}
\newcommand{\dz}{ \, {\rm d} z}
\newcommand{\dv}{ \, {\rm d} v}
\newcommand{\dw}{ \, {\rm d} w}

\newcommand{\dmu}{\, {\rm d} \mu}

\newcommand{\One}{\boldsymbol{1}}

\newcommand{\Id}{{\rm{Id}}}
\newcommand{\Ss}{{\mathbb{S}}}

\newcommand{\Eps}{\epsilon}
\newcommand{\Ni}{\noindent}

\newcommand{\BigO}{{\mathcal{O}}}
\newcommand{\Supp}{supp \,}

\newcommand{\CalP}{{\mathcal{P}}}

\newcommand{\Null} {{\rm Null} \, }
\newcommand{\Span} {{\rm Span} \, }

\newcommand{\VV}{\mathbb{V}}

\newcommand{\abs}[1]{\left\lvert#1\right\rvert}
\newcommand{\norm}[1]{\left\lVert#1 \, \right\rVert}
\newcommand{\vint}[1]{\left\langle#1\right\rangle}

\newcommand{\vpran}[1]{\left(#1\right)}



\begin{document}

\title[Kinetic Equation with Internal States]{Multiple  asymptotics  of Kinetic Equations with Internal States}

\author{Benoit Perthame}
\address{Sorbonne Universit{\'e}, CNRS, Universit\'{e} de Paris, Inria, Laboratoire Jacques-Louis Lions UMR7598, F-75005 Paris}
\email{Benoit.Perthame@sorbonne-universite.fr}
\author{Weiran Sun}
\address{Department of Mathematics, Simon Fraser University, 8888 University Dr., Burnaby, BC V5A 1S6, Canada}
\email{weirans@sfu.ca}
\author{Min Tang}
\address{Institute of natural sciences and department of mathematics, 
Shanghai Jiao Tong University, Shanghai, 200240, China}
\email{tangmin@sjtu.edu.cn}
\author{Shugo Yasuda}
\address{Graduate School of Simulation Studies, University of Hyogo, 650-0047 Kobe, Japan}
\email{yasuda@sim.u-hyogo.ac.jp}
\date{\today}
\begin{abstract}
The run and tumble process is well established in order to describe the  movement of bacteria in response to a chemical stimulus. However the relation between the tumbling rate and the  internal state of bacteria is poorly understood. The present study aims at deriving models at the  macroscopic scale from assumptions  on the microscopic scales. In particular we are interested in comparisons between the stiffness of the response and the adaptation time. Depending on the asymptotics chosen both the standard Keller-Segel equation and the flux-limited Keller-Segel (FLKS) equation can appear. An interesting mathematical issue arises with a new type of equilibrium equation leading to solution with singularities. 
  
\end{abstract}
\subjclass[2010]{35B25, 35Q20, 35Q84, 35Q92, 92C17}

\keywords{flux limited Keller-Segel system; chemotaxis; drift-diffusion equation; asymptotic analysis; kinetic transport theory}

\maketitle
%
\section{Introduction}
Chemotaxis is the movement of bacteria in response to a chemical stimulus. 
 Individual bacteria moves by alternating forward-moving runs and reorienting tumbles. The velocity jump model has been proposed to describe the switching between these two states, in which the two processes, run-to-tumble and tumble-to-run, are modelled by two Poisson processes. The frequencies of these two poisson processes are determined by intracellular molecular biochemical pathway. Therefore, to study the chemotaxis behaviour quantitatively, it is crucial to understand the response of bacteria to signal changes and relate this information to the switching frequency. The mechanism has been well understood for {\it Escherichia coli ({\it E. coli})} chemotaxis and other bacteria using similar strategies to move have also been observed (\cite{Duffy97,Hazelbauer12}). In what follows we will focus on the behavior of {\it E. Coli}.

It is known that {\it  E. coli} responds to signal changes in two steps: excitation and adaptation. Excitation is when {\it E. coli} rapidly changes the tumbling frequency as it detects external signal changes, while the slow adaptation allows the cell to relax back to the basal tumbling frequency (the frequency when the intracellular chemical reactions are at equilibrium) \cite{JOT}. In the simplest description of the biochemical pathways, a single variable $m$ is used to represent the intracellular methylation level. The methylation has an equilibrium level $M(t, x)$ which is a function of extra-cellular chemical concentration.
Using $F(m,M)$ as the adaptation rate, the equation for the adaptation process has the form
\begin{align*}
\frac{{\rm d}m}{\dt}=F(m,M(t, x)) \,.
\end{align*}
The switching frequencies of run-to-tumble and tumble-to-run are determined by both $m$ and $M(t, x)$. The tumbling time can be ignored since it is usually much shorter than the running time, thus one can combine the two successive processes run-to-tumble and tumble-to-run together and assume that once the bacterium stops run it will immediately choose randomly some direction and start running again. Tumbling frequency is used to describe this Poisson process and it depends on the methylation level $m$ and its equilibrium $M(t, x)$.

In order to understand the relation between individual bacteria movement and their population level behaviour, 
pathway-based kinetic-transport model has been proposed in \cite{ErbanOthmer04,SWOT}. This model governs the evolution of the probability density function $p(t, x,v,m)$ of the bacteria at time $t$, position $x \in\R^d$, velocity $v \in \VV$, and methylation level $m$. The velocity space $\VV$ is $[-V_0,V_0]$ for $d=1$ and $V_0\mathbb{S}^d$ for $d\geq 2$ (the sphere with radius $V_0$). The general form of the kinetic equation is
\begin{align} \label{eq:kinetic-m} 
  \del_t p + v \cdot \nabla_x p + \del_m [F(m,M)p] 
  &= Q[m,M](p) \,,
\end{align}
where the tumbling term $Q[m,M](p)$ satisfies
\begin{align} \label{def:Q}
 Q[m,M](p)
 = \frac{1}{||\VV||}\int_\VV  \left[
     \lambda (m,M, v, v') \, p(t, x, v', m)  
     - \lambda (m,M, v', v) \, p(t, x, v, m) \right] \dv'.
\end{align}
Here $\lambda(m, M, v, v')$ is the methylation-dependent tumbling frequency from $v'$ to $v$ and $||\VV||=\int_\VV dv$.

The pathway-based kinetic equation in \eqref{eq:kinetic-m} is at the mesoscopic level. It can bridge the microscopic (individual) and macroscopic (population) level models by using moment closure or asymptotic analysis \cite{ErbanOth2,SWOT,ST,STY,Xue2}.
In this paper, we are interested in building such connections of some macroscopic models with \eqref{eq:kinetic-m}. Particular attention will be paid to the flux-limited Keller-Segel (FLKS) model. The most standard and popular macroscopic population level model describing the dynamics of the bacteria density is the classical Keller-Segel (KS) model \cite{Patlak,KS1,KS2}. There has been extensive mathematical studies of the KS equation \cite{BBW2015}. Analytically it has been discovered that depending on the cell number, solutions of KS model can either undergo smooth dispersion or blow up in finite time \cite{CW08,BCL09}. 
The reason that blow-up could happen is because the drift velocity is proportional to the gradient of the external chemicals $|\nabla S|$. Therefore it is not bounded when $|\nabla S|\to \infty$. This is in discrepancy with reality since the population level drift velocity is expected to saturate when the chemical gradient is large. Hence in \cite{HP09} a more physically relevant FLKS model is proposed, which has the form 
 \begin{align} \label{eq:FLKS}
   &\del_t \rho - \nabla_x \cdot \vpran{D\nabla_x \rho}
   + \nabla_{x} \vpran{\rho \, \phi(|\nabla_x S|) \nabla S} = 0, \\
   &\del_tS - D_S \Delta S+\alpha S=\rho,
\end{align} 
 where $\rho(t, x)$ is the cell density at time $t$ and position $x$, $\phi(|\nabla_x S|)|\nabla_x S|$ is a bounded function in $|\nabla_x S|$. Unlike the classical KS model, the solution to \eqref{eq:FLKS} exists globally in time \cite{Chertock,BW2017}. Interesting features of the FLKS are specific stiff response induced unstability and existence of traveling waves (stable or unstable), \cite{PeYa2018, CPY2018}.
 
The classical KS model has been recovered from the kinetic equation \eqref{eq:kinetic-m} as diffusion limits, as well as the FLKS \cite{PVW2018}. 
 In \cite{ErbanOthmer04,XO}, the authors derived the KS equation by incorporating the linear adaptation. Recently, intrinsically nonlinear signaling pathway are considered \cite{STY,Xue2}. The derivations in \cite{ErbanOthmer04, STY, SWOT,XO,Xue2} are formal and are based on moment closure techniques.
 When the internal state is not far from its expectation, the moment closure method provides the correct behavior of {\it E. coli} on the population level. However, the closeness assumption is only valid when the chemical gradient is small so that the internal states of different bacteria can concentrate and yield the KS equation \cite{SWOT,ST,Xue1}. In general this assumption does not always apply for other scales of the chemical gradient and adaptation time. 
Our goal is to derive different macroscopic models for different chemical gradient and adaptation time by asymptotic analysis.

We assume that the tumbling frequency $\lambda$ depends only on $M(t,x)-m$. This assumption is valid for {\it E. coli} chemotaxis \cite{SWOT}. Moreover, we use the following linear model for the adaptation 
\begin{equation}\label{eq:F}
\frac{{\rm d}m}{\dt}=F(m,M(t,x))=\frac{1}{\tau}\big(M(t,x)-m\big),
\end{equation}
where $\tau$ gives the characteristic time scale of the adaptation process \cite{ErbanOthmer04}.
Introducing a new variable 
$$
y=M(t,x)-m,
$$
and letting $f(t,x,v,y)=p(t,x,v,m)$,
one can rewrite the original model \eqref{eq:kinetic-m} as
\begin{align} \label{eq:kinetic-y} 
  \del_t f + v \cdot \nabla_x f +(\del_tM+v\cdot\nabla_xM)\del_yf-\frac{1}{\tau}\del_y (yf) 
  &= \lambda(y)\left[\frac{1}{||\VV||}\int_\VV f(t, x, v', y)\dv' - 
     f(t, x, v, y) \right]\,.
\end{align}
In this paper, we only consider the case when $M(t,x)$ is independent of time and $\nabla_xM$ is uniform in space. This special case is physically interesting: for {\it E. coli} chemotaxis, since $M(t,x)$ is related to the extra-cellular attractant profile by a logarithmic dependency, a uniform $\nabla_xM$ corresponds to the exponential environment as in the experiment in 
 \cite{Kalinin}. Let $G$ be the constant vector given by
\begin{equation} \label{eq:G}
    G = \nabla_x M \,.
\end{equation}
Let $\lambda_0$ be the characteristic rate of tumbling and $\delta$ the stiffness of the response. We denote 
\begin{align*}
   \vint f =\frac{1}{||\VV||}\int_\VV f(t,x,v',y)dv' \,,
\qquad
   \lambda(y)=\lambda_0\Lambda\left(\frac{y}{\delta}\right) \,.
\end{align*} 
Then equation~\eqref{eq:kinetic-y} becomes
\begin{align} \label{eq:kinetic-yG} 
  \del_t f + v \cdot \nabla_x f +\del_y \left( \left(v\cdot G-\frac{y}{\tau} \right)f\right) 
  &= \lambda_0\Lambda\left(\frac{y}{\delta}\right)\left(\vint f - 
     f \right)\,,
\end{align}
To perform the asymptotic analysis, we introduce the non-dimensional variables 
\begin{align*}
   \tilde x=x/L_0 \,,
\quad 
   \tilde v=v/V_0 \,,
\quad
   \tilde t=t/t_0 \,,
\quad 
  \tilde y=y/\delta \,,
\end{align*}
where $L_0$, $V_0$, and $t_0$ are the characteristic length, speed, and time respectively. Then the equation \ref{eq:kinetic-yG} is reformulated as 
\begin{align}\label{eq:kinetic-yG2}
  \sigma \del_{\tilde{t}} f + \tilde{v} \cdot \nabla_{\tilde x} f +\del_{\tilde y} \left(\left(\tilde{v}\cdot \frac{\tilde G}{\delta}-\frac{\tilde y}{\tilde \tau} \right)f\right) 
  &= {\tilde \lambda_0}\Lambda\left(\tilde y\right)\left(\vint f - 
     f \right)\,,
\end{align}
with the parameters given by 
\begin{align*}
\quad \tilde \lambda_0= \lambda_0 L_0/V_0 \,,
\quad
   \tilde \tau=\tau/(L_0/V_0) \,,
\quad
   \tilde G=G L_0 \,,
\quad
   \sigma=L_0/(t_0V_0) \,.
\end{align*}
Since there will be no confusion, in the remainder of the paper we drop the tilde sign for simplicity. The final form of the kinetic equation under consideration is 
\begin{align}\label{eq:kinetic-scaled}
  \sigma \del_t f + v \cdot \nabla_{x} f +\del_{y} \left(\left(v\cdot \frac{G}{\delta}-\frac{y}{\tau} \right)f\right) 
  &= {\lambda_0}\Lambda\left(y\right)\left(\vint f - 
     f \right)\,,
\end{align}

Our goal in this paper is to start from \eqref{eq:kinetic-scaled} and systematically derive macroscopic limits from it:  assuming $\lambda_0$ is large and $G=O(1)$, we consider the seamless reorientation by runs and tumbles with different scalings of $\tau$ and $\delta$.
More specifically, we investigate the following three cases:
\begin{itemize}
\item[I.] Fast adaptation and stiff response: both $\tau$ and $\delta$ are as small as $\lambda_0^{-1}$;
\item[II.] Very fast adaptation and very stiff response: both $\tau$ and $\delta$ are much smaller than $\lambda_0^{-1}$;
\item[III.] Moderate  adaptation and moderate response, i.e. both $\tau$ and $\delta$ are of $O(1)$.
\end{itemize}
The common theme in these limits is to decide when the classical KS and the FLKS equations will occur. A brief summary of our result is
\begin{enumerate}
\item 
In Case I, both the hyperbolic model and a FLKS type model can be found. In particular, when the leading order behaviour of the bacteria population satisfies a hyperbolic model, we further consider the motion of its front profile and find a  FLKS model in the moving frame.
\item 
In Case II,  a FLKS type model can be derived.
\item
 In Case  III the solution will tend to the solution of a classical KS model.
\end{enumerate}

The rest of the paper is organized as follows.
In Section 2, we use asymptotic analysis to formally derive the leading-order macroscopic equations from~\eqref{eq:kinetic-m}.
Some properties of the leading order distribution are rigorously shown in Section 3. In Section 4, we show numerical results that are consistent with the analytical properties in Section 3. Finally we conclude in Section 5.

\section{Formal Asymptotic Limits}
In this section we show that both classical and flux-limited Keller-Segel equations can arise in different physical regimes characterized by the stiffness of the chemotactic response and the rate of adaptation. 
In all the cases considered below we fix $\lambda_0$ as
\begin{align*}
   \lambda_0 = \frac{1}{\Eps}
\qquad \text{with} \qquad
   \Eps \ll 1 \,.
\end{align*}


\subsection{Fast adaptation and stiff response} 
In this case we assume that
\begin{align*}
\delta=\tau=\epsilon = \lambda_0^{-1} \,,
\qquad
\sigma = \Eps^{\alpha-1} \,,
\qquad
\alpha = 1 \,\, \text{or} \,\, 2 \,. 
\end{align*}
We also assume that the tumbling frequency $\Lambda$ has the specific structure
\begin{align} \label{assump:Lambda-FLKS}
   \Lambda (y) = \Lambda_0(y) + \Eps \Lambda_1(y) > 0 \,,
\qquad
   \Lambda_0, \, \Lambda_1 \in C_b(\R) \,,
\end{align}
where $\Lambda_0$ is a strictly positive continuous function and $\Lambda_1$ is independent of $\Eps$. The space $C_b(\R)$ is the collection of continuous and bounded functions on $\R$. 

Consider the scaled equation
\begin{align} \label{eq:q-Eps-alpha}
   \Eps^\alpha \del_t q_\Eps + \Eps v \cdot \nabla_x q_\Eps
  + \del_y \vpran{(v \cdot G - y) q_\Eps} =  \Lambda(y) (\vint{q_\Eps} - q_\Eps) \,,
\qquad
  (y, v) \in \R \times \VV \,.
\end{align}
The leading-order term of $q_\Eps$ is of the form $\rho (t,x) Q_0(y,v)$ where $Q_0$ satisfies 
\begin{align} \label{def:Q-0-shift}
   \del_y \vpran{(v \cdot G - y) Q_0} =  \Lambda_0(y) (\vint{Q_0} - Q_0), \qquad Q_0 \geq 0 , \quad \iint_{\R \times \VV} Q_0 \dy \dv =1 \, .
\end{align}
The existence and uniqueness of such $Q_0$ is shown in Theorem~\ref{thm:Q-0-general}. 

\begin{rmk}
In Theorem~\ref{thm:Q-0-general} we show that $Q_0$ is compactly supported on $[-|G|, |G|] \times \VV$ and is strictly positive in $(-|G|, |G|) \times \VV$. In the rest of the current section, all the integration involving $Q_0$ are performed over $(-|G|, |G|) \times \VV$,  although we often write the integration domain as $\R \times \VV$ for the sake of simplicity in notation. 
\end{rmk}

Let $v_0$ be the leading-order average velocity given by
\begin{align} \label{def:v-0}
   v_0 = \int_\R \int_\VV v \, Q_0 \dv \dy \,.
\end{align}
In the case where $v_0 \neq 0$, by letting $\alpha = 1$ one can derive that the leading-order approximate of $q_\Eps$ satisfies a pure transport or hyperbolic equation. More specifically, we have the formal limit
\begin{thm} \label{thm:hyperbolic}
Suppose $\alpha = 1$ and $\Lambda$ satisfies~\eqref{assump:Lambda-FLKS} together with the assumptions in Theorem~\ref{thm:Q-0-general}. Suppose the average velocity $v_0$ defined in~\eqref{def:v-0} is nonzero. Let $q_\Eps$ be the non-negative solution to~\eqref{eq:q-Eps-alpha}. Suppose $q_\Eps \to q_0$ in $L^1(\!\dv\dy\dx)$. Then $q_0 = \rho_0(t, x) Q_0(y, v)$ and $\rho_0$ satisfies the hyperbolic equation
\begin{align} \label{limit:hyperbolic}
   \del_t \rho_0 + \nabla_x \cdot \vpran{v_0 \rho_0} = 0 \,.
\end{align}
\end{thm}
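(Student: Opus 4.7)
The plan is to take the $\Eps \to 0$ limit of the rescaled equation in two pieces: first identify the $(y,v)$-profile of $q_0$ using the uniqueness of $Q_0$, then read off the transport equation for $\rho_0$ from the zeroth $(y,v)$-moment of the kinetic equation.

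First, with $\alpha=1$ and $\Lambda = \Lambda_0 + \Eps\Lambda_1$, I would rewrite~\eqref{eq:q-Eps-alpha} as
\begin{align*}
\del_y \vpran{(v\cdot G - y) q_\Eps} - \Lambda_0(y)\vpran{\vint{q_\Eps} - q_\Eps}
= -\Eps\vpran{\del_t q_\Eps + v\cdot \nabla_x q_\Eps} + \Eps \Lambda_1(y)\vpran{\vint{q_\Eps} - q_\Eps}.
\end{align*}
Since $q_\Eps \to q_0$ in $L^1(\dv\dy\dx)$ and $\Lambda_0, \Lambda_1$ are bounded, the right-hand side tends to $0$ in the sense of distributions, yielding
\begin{align*}
\del_y \vpran{(v\cdot G - y) q_0} = \Lambda_0(y) \vpran{\vint{q_0} - q_0}.
\end{align*}
For each fixed $(t,x)$, this is precisely equation~\eqref{def:Q-0-shift}. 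Setting $\rho_0(t,x) := \iint q_0(t,x,y,v)\dy\dv \geq 0$, the uniqueness part of Theorem~\ref{thm:Q-0-general} (applied to $q_0/\rho_0$ wherever $\rho_0 > 0$, trivially otherwise) forces the factorization $q_0(t,x,y,v) = \rho_0(t,x)\, Q_0(y,v)$.

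Second, I would integrate~\eqref{eq:q-Eps-alpha} over $(y,v) \in \R\times\VV$. The collision term drops after averaging in $v$, and the $y$-flux term vanishes upon integration in $y$ once one justifies compact support (or sufficient decay) of $q_\Eps$ in $y$ uniformly in $\Eps$ — this is reasonable because the $y$-characteristics $\dot y = v\cdot G - y$ are attracted to $[-|G|,|G|]$, mirroring the support statement for $Q_0$. This produces the exact conservation law
\begin{align*}
\del_t \rho_\Eps + \nabla_x \cdot J_\Eps = 0, \qquad \rho_\Eps := \iint q_\Eps \dy\dv, \quad J_\Eps := \iint v\, q_\Eps \dy\dv.
\end{align*}
Passing to the $L^1$ limit and using the factorization yields $J_\Eps \to \iint v \rho_0 Q_0 \dy\dv = v_0\rho_0$, which is exactly~\eqref{limit:hyperbolic}.

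The main obstacle is the factorization step, which requires Theorem~\ref{thm:Q-0-general} to provide a one-dimensional null space (on non-negative finite-mass functions) for the operator $f \mapsto \del_y((v\cdot G-y)f) - \Lambda_0(y)(\vint f - f)$; without this uniqueness one cannot conclude that $q_0$ is of tensor form. A secondary technical point, lurking in the moment computation, is the $\Eps$-uniform support/decay of $q_\Eps$ needed to discard the boundary flux in $y$ and to pass the current $J_\Eps$ to the limit; since $v$ ranges over the bounded set $\VV$ the $v$-integration is harmless, so this reduces to the $y$-support propagation sketched above.
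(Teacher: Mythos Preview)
Your proposal is correct and follows essentially the same route as the paper: pass to the limit in the equation to identify $q_0$ as a nonnegative $L^1$ solution of the steady problem~\eqref{def:Q-0-shift}, invoke the uniqueness in Theorem~\ref{thm:Q-0-general} to factor $q_0=\rho_0 Q_0$, and then take the $(y,v)$-moment to obtain the conservation law and its limit~\eqref{limit:hyperbolic}. Your extra remarks on the $y$-support of $q_\Eps$ and the vanishing boundary flux are more detailed than what the paper writes, but the strategy is identical.
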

\begin{proof}
Since~\eqref{eq:q-Eps-alpha} is a linear equation, we can pass $\Eps \to 0$ and obtain $q_0$ as a non-negative $L^1$ solution to the steady-state equation such that
\begin{align*}
    \del_y \vpran{(v \cdot G - y) q_0} =  \Lambda_0(y) (\vint{q_0} - q_0) \,.
\end{align*}
By the uniqueness shown in Theorem~\ref{thm:Q-0-general}, $q_0$ is a multiple of $Q_0$ with a coefficient $\rho_0 \in L^1(\!\dx)$. The conservation law associated with~\eqref{eq:q-Eps-alpha} and $\alpha=1$ has the form
\begin{align*}
   \del_t \rho_\Eps + \nabla_x \cdot \int_\R \int_\VV v q_\Eps \dv \dy = 0 \,,
\end{align*}
The limiting equation~\eqref{limit:hyperbolic} is obtained by passing $\Eps \to 0$ and using that $q_0 = \rho_0 Q_0$.
\end{proof}

To observe nontrivial diffusive behaviour from equation~\eqref{eq:q-Eps-alpha}, we make a change of variable such that
\begin{align*}
   f_\Eps(t, x, y, v)= q_\Eps(t, x + v_0 t, y, v) \,,
\end{align*}
where again $v_0$ is the average speed defined in~\eqref{def:v-0}. Physically speaking, we consider the motion of the front profile of $q_\Eps$.
Now we let $\alpha = 2$. Then the equation for $f_\Eps$ reads
\begin{align} \label{eq:f-Eps-shift}
   \Eps^2 \del_t f_\Eps + \Eps (v - v_0) \cdot \nabla_x f_\Eps
  + \del_y \vpran{(v \cdot G - y) f_\Eps} =  \Lambda(y) (\vint{f_\Eps} - f_\Eps) \,.
\end{align}
Before showing the formal limit for $f_\Eps$, we prove a lemma of the classical entropy-estimate type:
\begin{lem} \label{lem:positivity-entropy}
Let $T_0$ be the operator defined as 
\begin{align} \label{def:T}
  T_0g = \del_y \vpran{(v \cdot G - y) g} -  \Lambda_0(y) (\vint{g} - g) \,.
\end{align}
Let $Q_0$ be the unique solution in~\eqref{def:Q-0-shift}. Then for any $g$ that makes sense of all the integrals and satisfies that 
\begin{align} \label{cond:vanishing-IBP}
    \lim_{y \to \pm |G|} \frac{g^2}{Q_0}(v, y) = 0
\qquad
   \text{for all $v \in \VV$.}
\end{align}
we have
\begin{align*}
   \int_\R \int_\VV \frac{g}{Q_0} T_0g \dv\dy 
\geq 
   \frac{1}{2} \int_\R \int_\VV  \Lambda_0(y) 
       \vpran{\frac{g}{Q_0} - \vint{\frac{g}{Q_0}}}^2 Q_0(v) \dv\dy
\geq 0 \,.
\end{align*}
Note that a particular case for~\eqref{cond:vanishing-IBP} to hold is when $g/Q_0 \in L^\infty(\R \times \VV)$. 
\end{lem}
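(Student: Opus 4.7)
The plan is to run the classical relative-entropy-dissipation computation against the steady state $Q_0$. I would substitute $g = h\, Q_0$ with $h = g/Q_0$, and use that $Q_0$ lies in the kernel of $T_0$ via~\eqref{def:Q-0-shift}. The Leibniz rule gives
\begin{align*}
T_0 g &= (v\cdot G - y)\, Q_0\, \del_y h + h\, \del_y\bigl((v\cdot G - y) Q_0\bigr) - \Lambda_0(y)\bigl(\vint{g} - g\bigr),
\end{align*}
and the middle term can be replaced, using~\eqref{def:Q-0-shift}, by $h\,\Lambda_0(\vint{Q_0} - Q_0)$. Multiplying by $h$ and integrating over $\R\times\VV$, the transport contribution turns into $\tfrac12(v\cdot G - y) Q_0\, \del_y(h^2)$, which I would integrate by parts in $y$.

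The resulting boundary contribution is $[(v\cdot G - y)\, g^2/Q_0]_{y=\pm|G|}$; since $(v\cdot G - y)$ is bounded on $\VV\times[-|G|,|G|]$ and $g^2/Q_0 \to 0$ by hypothesis~\eqref{cond:vanishing-IBP}, it vanishes. After collecting all surviving terms, I expect the identity
\begin{align*}
\int_\R\!\int_\VV \frac{g}{Q_0}\, T_0 g \dv\dy
&= \tfrac12\!\int_\R\!\int_\VV \Lambda_0 \vint{Q_0}\, h^2 \dv\dy + \tfrac12\!\int_\R\!\int_\VV \Lambda_0 Q_0\, h^2 \dv\dy \\
&\quad - \int_\R\!\int_\VV \Lambda_0\, h\, \vint{h Q_0} \dv\dy,
\end{align*}
where I have used that $\vint{g} = \vint{h Q_0}$ is independent of $v$.

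To conclude, I would expand $\tfrac12\int\!\int \Lambda_0 Q_0 (h - \vint{h})^2$ and subtract; the cross terms cancel against the last term above because $\int_\VV h\,\vint{h Q_0}\dv = \int_\VV Q_0 h\,\vint{h}\dv = \|\VV\|\,\vint{h}\,\vint{h Q_0}$, and the leftover difference is $\tfrac{\|\VV\|}{2}\int_\R \Lambda_0 \vint{Q_0}\bigl(\vint{h^2} - \vint{h}^2\bigr)\dy$, which is non-negative by Jensen's inequality applied to the uniform average $\vint{\cdot}$. This produces both inequalities of the lemma simultaneously. The only genuinely delicate point is the boundary justification of the integration by parts --- $Q_0$ vanishes at $y = \pm|G|$ so $h$ is potentially singular there --- and condition~\eqref{cond:vanishing-IBP} is tailored precisely to kill that boundary contribution; the rest is algebraic bookkeeping.
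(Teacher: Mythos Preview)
Your proposal is correct and follows essentially the same relative-entropy computation as the paper: substitute $h=g/Q_0$, integrate the transport term by parts (the boundary term $(v\cdot G - y)\,g^2/Q_0$ vanishes by~\eqref{cond:vanishing-IBP}), replace $\del_y((v\cdot G - y)Q_0)$ via~\eqref{def:Q-0-shift}, and then identify the remainder as a non-negative Jensen term. The paper presents the final algebraic step as a single displayed inequality without naming the Jensen remainder explicitly, but the content is identical.
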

\begin{proof}
The proof follows from a direct calculation. Indeed, by integration by parts,
\begin{align*}
   \int_\R \int_\VV \frac{g}{Q_0} T_0g \dv\dy
&= \int_\R \int_\VV \frac{g}{Q_0}
   \del_y \vpran{(v \cdot G - y) \frac{g}{Q_0} Q_0} \dv\dy
   - \int_\R \int_\VV \frac{g}{Q_0} \Lambda_0(y)(\vint{g} - g) \dv\dy
\\
& = \frac{1}{2} \int_\R \int_\VV
      \vpran{\frac{g}{Q_0}}^2 \del_y \vpran{(v\cdot G - y) Q_0} \dv\dy
      - \int_\R \int_\VV \frac{g}{Q_0} \Lambda_0(y) (\vint{g} - g) \dv\dy \,,
\end{align*}
where condition~\eqref{cond:vanishing-IBP} is used to guarantee that the boundary terms vanish in the integration by parts.
Using the equation for $Q_0$, we have
\begin{align*}
    \int_\R \int_\VV \frac{g}{Q_0} T_0g \dv\dy
& = \frac{1}{2} \int_\R \int_\VV
      \vpran{\frac{g}{Q_0}}^2 \Lambda_0(y) \vpran{\vint{Q_0} - Q_0} \dv\dy
      - \int_\R \int_\VV \frac{g}{Q_0} \Lambda_0(y) (\vint{g} - g) \dv\dy \,.
\\
& = \frac{1}{2} \int_\R \int_\VV \Lambda_0(y)
      \vpran{\vpran{\frac{g}{Q_0}}^2 \vint{Q_0}
      - 2\frac{g}{Q_0} \vint{g}
      + \frac{g^2}{Q_0}} \dv\dy
\\
& \geq \frac{1}{2} \int_\R \int_\VV  \Lambda_0(y) 
       \vpran{\frac{g}{Q_0} - \vint{\frac{g}{Q_0}}}^2 Q_0(v) \dv\dy \geq 0 \,.
\end{align*}
The equal sign holds only when $g/Q_0$ is independent of $v$. \end{proof}

Equipped with Lemma~\ref{lem:positivity-entropy}, we show the formal asymptotic limit of~\eqref{eq:f-Eps-shift} in the following theorem:
\begin{thm} \label{thm:front-asymp}
Let $f_\Eps$ be the non-negative solution to~\eqref{eq:f-Eps-shift} and $\rho_\Eps = \int_\R \int_\VV f_\Eps(t,x, y, v) \dv\dy$ such that
\begin{align*}
   f_\Eps \to f_0 \quad \text{locally  in $L^1(\!\dt \dv\dy\dx)$}
\end{align*}
for some $f_0(t)  \in L^1(\dv\dy\dx)$. Then $f_0 = \rho_0(t, x) Q_0(y,v)$ and $\rho_0$ satisfies the drift-diffusion equation
\begin{align} \label{eq:FLKS1}
   \del_t \rho_0 - \nabla_x \cdot \vpran{D_{0, 2} \nabla_x \rho_0}
   + \nabla_{x} \vpran{c_{0, 2} \rho_0} = 0 \,,
\end{align}
where the diffusion coefficient and drift velocity are given by 
\begin{align} 
   D_{0, 2} &=  \int_\R \int_\VV (v-v_0)  \otimes T_0^{-1}((v-v_0) Q_0) \dv \dy  \,, \label{def:D-0-2}
\\
   c_{0,2} &= \int_\R \int_{\VV} (v - v_0) T^{-1}_0 \vpran{\Lambda_1(y) (\vint{Q_0} - Q_0)} \dv\dy \,. \label{def:c-0-2}
\end{align}
Moreover, $D_{0, 2}$ is strictly positive. 
\end{thm}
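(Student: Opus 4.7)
The plan is to perform a Chapman--Enskog expansion around the local equilibrium $Q_0$. Writing formally $f_\Eps = f_0 + \Eps f_1 + \Eps^2 f_2 + O(\Eps^3)$ and substituting into~\eqref{eq:f-Eps-shift}, the leading order forces $T_0 f_0 = 0$, so Theorem~\ref{thm:Q-0-general} identifies $f_0(t,x,y,v) = \rho_0(t,x)\, Q_0(y,v)$ for some non-negative density $\rho_0$. The $L^1$ convergence assumed on $f_\Eps$ legitimates this factorization as the limit, leaving $\rho_0$ to be determined by solvability at higher order.

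At order $\Eps$ the expansion produces
\begin{equation*}
T_0 f_1 = -(v - v_0) Q_0 \cdot \nabla_x \rho_0 + \rho_0\,\Lambda_1(y)\bigl(\vint{Q_0} - Q_0\bigr).
\end{equation*}
Since $T_0 g$ is a $y$-derivative (whose integral vanishes by the compact support of $Q_0$ from Theorem~\ref{thm:Q-0-general}) plus a term of zero $v$-average, the solvability condition for $T_0 g = h$ reduces to $\iint h \dv\dy = 0$. The first term on the right-hand side integrates to $(v_0 - v_0)\cdot\nabla_x\rho_0 = 0$ by the very definition~\eqref{def:v-0} of $v_0$, while the second has zero $v$-average pointwise in $y$; hence $f_1$ is determined modulo $\Null T_0$ and admits the explicit form
\begin{equation*}
f_1 = -T_0^{-1}\bigl((v-v_0) Q_0\bigr)\cdot\nabla_x\rho_0 + \rho_0\,T_0^{-1}\bigl(\Lambda_1(y)(\vint{Q_0} - Q_0)\bigr) + \rho_1(t,x)\, Q_0.
\end{equation*}

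The macroscopic equation comes from the conservation law obtained by integrating~\eqref{eq:f-Eps-shift} against $\dv\dy$. The tumbling term and the $y$-divergence drop out, and after dividing by $\Eps$ one gets
\begin{equation*}
\Eps\, \del_t \rho_\Eps + \nabla_x \cdot \!\iint (v - v_0)\, f_\Eps \dv\dy = 0.
\end{equation*}
Since $\iint(v-v_0) f_0\dv\dy = 0$, the flux is of order $\Eps$; substituting the expansion of $f_\Eps$ and noting that the free component $\rho_1 Q_0$ in $f_1$ contributes nothing to $\iint (v-v_0)(\cdot)\dv\dy$, I obtain
\begin{equation*}
\iint (v-v_0) f_\Eps \dv\dy = \Eps\bigl(-D_{0,2}\nabla_x\rho_0 + c_{0,2}\rho_0\bigr) + O(\Eps^2),
\end{equation*}
with $D_{0,2}$ and $c_{0,2}$ precisely as in~\eqref{def:D-0-2}--\eqref{def:c-0-2}. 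Dividing the conservation law once more by $\Eps$ and sending $\Eps\to 0$ yields~\eqref{eq:FLKS1}.

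The main obstacle, and the part where I expect the real work to lie, is strict positivity of $D_{0,2}$. For $\xi\in\R^d$ set $g_\xi = T_0^{-1}\bigl((\xi\cdot(v-v_0)) Q_0\bigr)$, so that $T_0 g_\xi = (\xi\cdot(v-v_0))Q_0$. Lemma~\ref{lem:positivity-entropy} then gives
\begin{equation*}
\xi^\top D_{0,2}\, \xi \;=\; \iint \frac{g_\xi}{Q_0}\, T_0 g_\xi \dv\dy \;\geq\; \frac{1}{2} \iint \Lambda_0(y) \Bigl(\frac{g_\xi}{Q_0} - \vint{\tfrac{g_\xi}{Q_0}}\Bigr)^2 Q_0 \dv\dy \;\geq\; 0,
\end{equation*}
provided the vanishing condition~\eqref{cond:vanishing-IBP} holds for $g_\xi$, a regularity check that would have to be read off from the explicit construction of $Q_0$ in Theorem~\ref{thm:Q-0-general}. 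To upgrade this to strict positivity when $\xi\neq 0$, suppose the lower bound vanished; then $g_\xi/Q_0$ would depend only on $y$, say $g_\xi = \phi(y) Q_0$. Using the equation satisfied by $Q_0$, a direct computation gives $T_0(\phi Q_0) = (v\cdot G - y)\phi'(y) Q_0$, so the identity $(v\cdot G - y)\phi'(y) = \xi\cdot(v - v_0)$ would have to hold for all $v,y$. Matching coefficients forces $\phi'(y) G = \xi$ and $y\phi'(y) = \xi\cdot v_0$ for every $y$, which immediately gives $\phi'\equiv 0$ and hence $\xi = 0$. This contradiction delivers the strict positivity of $D_{0,2}$ and completes the program.
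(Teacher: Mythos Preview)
Your proposal is correct and follows essentially the same route as the paper: the identification of $f_0$ via the kernel of $T_0$, the computation of the first-order correction by inverting $T_0$, insertion into the integrated conservation law, and the entropy-based strict positivity argument for $D_{0,2}$ (including the reduction to $(v\cdot G - y)\phi'(y) = \xi\cdot(v-v_0)$ and the conclusion $\phi'\equiv 0$, $\xi=0$) all match. The only cosmetic difference is that the paper organizes the expansion as a micro--macro decomposition, applying $T_0^{-1}$ to the exact equation rewritten as $T_0(f_\Eps - \rho_\Eps Q_0) = -\Eps(\nabla_x\rho_\Eps)\cdot(v-v_0)Q_0 + \Eps\Lambda_1(\vint{f_\Eps}-f_\Eps) + \text{remainder}$, whereas you phrase it as a Chapman--Enskog power-series ansatz; at the formal level these are equivalent.
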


We conjecture that $c_{0,2} $ is bounded in $G$, thus rising again a FLKS type equation. A possible route toward a proof is as follows. We write
\[
c_{0,2} = \int_\R \int_{\VV} T^{-1\ast}_0 (v - v_0)  \;  \Lambda_1(y) (\vint{Q_0} - Q_0) \dv\dy
\]
and it remains to establish that $T^{-1\ast}_0 (v - v_0)$ is uniformly bounded in $G$ in $L^\infty$. 
Numerics sustain this boundedness, see Fig.~\ref{fig_c0}.

\begin{proof}
First, we use the same argument as in Theorem~\ref{thm:hyperbolic} to deduce that there exists $\rho_0 \in L^1(\!\dx)$ such that $f_0 = \rho_0 Q_0$ and $\rho_\Eps \to \rho_0$ in $L^1(\!\dx)$. Next, the conservation law for~\eqref{eq:f-Eps-shift} reads
\begin{align} \label{conserv:diffusion-front}
   \del_t \int_\R \int_\VV f_\Eps \dv\dy
+ \frac{1}{\Eps} \nabla_x \cdot \int_\R \int_\VV
   (v - v_0) f_\Eps \dv \dy 
= 0 \,.
\end{align}
By the definition of $v_0$ and $Q_0$, we have
\begin{align*}
   \int_\R \int_\VV (v-v_0) Q_0(y, v) \dv\dy = 0
\end{align*}
Hence ~\eqref{conserv:diffusion-front} can also be written as
\begin{align} \label{conserv:diffusion-front-revise}
   \del_t \int_\R \int_\VV f_\Eps \dv\dy
+ \frac{1}{\Eps} \nabla_x \cdot \int_\R \int_\VV
   (v - v_0) \vpran{f_\Eps - \rho_\Eps Q_0} \dv \dy 
= 0 \,.
\end{align}
Then we can re-write equation~\eqref{eq:f-Eps-shift} as
\begin{align*}
  T_0 \vpran{f_\Eps - \rho_\Eps Q_0} 
= -\Eps \vpran{\nabla_x \rho_\Eps} \cdot (v - v_0) Q_0
   - \Eps (v - v_0) \cdot \nabla_x \vpran{f_\Eps - \rho_\Eps Q_0}
   - \Eps^2 \del_t f_\Eps
   + \Eps \Lambda_1(y) \vpran{\vint{f_\Eps} - f_\Eps} \,.
\end{align*}
By Theorem~\ref{thm:Fredholm}, we apply the pseudo-inverse of $T_0$ and obtain that 
\begin{align*} 
   \frac{1}{\Eps} \vpran{f_\Eps - \rho_\Eps Q_0}
&= -\vpran{\nabla_x \rho_\Eps} \cdot T_0^{-1}\vpran{(v-v_0) Q_0}
   + T_0^{-1} \vpran{\Lambda_1(y) (\vint{f_\Eps} - f_\Eps)} 
\\
& \quad \,
   - T_0^{-1} \vpran{(v - v_0) \cdot \nabla_x \vpran{f_\Eps - \rho_\Eps Q_0}
   + \Eps \del_t f_\Eps} \,.
\end{align*}
Note that every term on the right-hand side of the above equality is well-defined since the terms inside $T_0^{-1}$ all satisfy the orthogonality condition given in~\eqref{cond:R-1}. By the limit $f_\Eps - \rho_\Eps Q_0 \to 0$ we have
\begin{align*}
   (v - v_0) \cdot \nabla_x \vpran{f_\Eps - \rho_\Eps Q_0}
   + \Eps \del_t f_\Eps \to 0 
\qquad
  \text{in the sense of distributions.}
\end{align*}
Hence, it formally holds that
\begin{align*}
   \frac{1}{\Eps} \vpran{f_\Eps - \rho_\Eps Q_0}
\to -\vpran{\nabla_x \rho_0} \cdot T_0^{-1}\vpran{(v-v_0) Q_0}
     + \rho_0 T^{-1}_0 \vpran{\Lambda_1(y) (\vint{Q_0}) - Q_0}
\qquad
  \text{as $\Eps \to 0$.}
\end{align*}
Inserting such limit in the limiting form of the conservation law in ~\eqref{conserv:diffusion-front} gives~\eqref{eq:FLKS} with $D_{0,2}$ and $c_{0,2}$ satisfying~\eqref{def:D-0-2} and~\eqref{def:c-0-2} respectively.

Finally, we show that $D_{0, 2}$ is a positive definite matrix. To this end, denote 
\begin{align*}
    h = T_0^{-1}((v-v_0) Q_0) \,,
\end{align*}
or equivalently, 
\begin{align} \label{eq:h}
 T_0h =  \del_y \vpran{(v \cdot G - y) h} -  \Lambda_0(y) (\vint{h} - h) 
= (v - v_0) Q_0 \,.
\end{align}
Then for any arbitrary $\alpha \in \R^d$, 
\begin{align*}
   T_0(h \cdot \alpha) =  \del_y \vpran{(v \cdot G - y) (h \cdot \alpha)} - \Lambda_0(y) (\vint{h \cdot \alpha} - (h \cdot \alpha)) 
= \vpran{(v - v_0) \cdot \alpha} Q_0 \,.
\end{align*}
Hence,
\begin{align*}
  \alpha^T D_{0, 2} \alpha 
=  \int_\R \int_\VV \vpran{(v-v_0) \cdot \alpha} T_0^{-1}\vpran{((v-v_0) \cdot \alpha) Q_0} \dv \dy
= \int_\R \int_\VV T_0(h \cdot \alpha) \frac{h \cdot \alpha}{Q_0}  \dv\dy \,.
\end{align*}
By Lemma~\ref{lem:positivity-entropy}, if we multiply $(h \cdot \alpha)/Q_0$ to $T_0(h \cdot \alpha)$ and integrate in $(y, v)$, then
\begin{align*}
  \int_\R \int_\VV T_0(h \cdot \alpha)\;  \frac{h \cdot \alpha}{Q_0}  \dv\dy
\geq 
  \frac{1}{2} \int_\R \int_\VV  \Lambda_0(y) 
       \vpran{\frac{h \cdot \alpha}{Q_0} - \vint{\frac{h \cdot \alpha}{Q_0}}}^2 Q_0(v) \dv\dy
\geq 0 \,.
\end{align*} 
This shows $\alpha^T D_{0,2} \alpha$ is non-negative and the equal sign of the above inequality holds only when $(h \cdot \alpha)/Q_0$ is independent of $v$. Now we show that for any $\alpha$ independent of $(y, v)$, the quantity $(h \cdot \alpha)/Q_0$ must depend on $v$. Hence the strict positivity of $\alpha^T D_{0,2} \alpha$ must hold. To this end, suppose on the contrary that there exists $\alpha_0 \in \R^d$ independent of $(y, v)$ and $\beta(y)$ such that $h \cdot \alpha_0 = \beta(y) Q_0$. Then
\begin{align*}
    \del_y \vpran{(v \cdot G - y) \beta(y) Q_0} 
- \Lambda_0(y) \beta(y) \vpran{\vint{Q_0} - Q_0}
= \vpran{(v - v_0) \cdot \alpha_0} Q_0  \,,
\end{align*}
which, by the definition of $Q_0$ simplifies to
\begin{align*}
   \beta'(y) (v \cdot G - y) = (v - v_0) \cdot \alpha_0 \,.
\end{align*}
Since $\alpha_0$ is independent of $y, v$, the only solution to such equation is $\beta' = 0$ and $\alpha_0 = 0$. Hence $\alpha^T D_{0,2} \alpha > 0$ for any $\alpha \neq 0$.
This proves the strict positivity of the diffusive coefficient matrix $D_{0, 2}$. 
\end{proof}

In order to justify that~\eqref{eq:FLKS} is indeed a flux-limited Keller-Segel equation, ideally we want to show that the drift velocity $c_{0,2}$ is generally nonzero, it is uniformly bounded in the chemical gradient $G$ and vanishes when $G$ approaches zero. In what follows we show that these properties are satisfied in the special case where $\Lambda_0$ is a constant. In Section~\ref{sec:numerics} we give some numerical evidence that $c_{0, 2}$ is uniformly bounded in $G$ even when $\Lambda_0$ depends on $y$.

\begin{lem} \label{lem:c02}
Suppose $\Lambda_0$ is a constant. Then the drift velocity $c_{0,2}$ satisfy that

\Ni (a) $c_{0,2}$ is nonzero. 

\Ni (a) $c_{0, 2}$ is bounded for all $G$. Moreover,  $c_{0,2} \to 0$ as $G \to 0$.

\end{lem}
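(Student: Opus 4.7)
The strategy rests on two features specific to the constant-$\Lambda_0$ setting: a parity symmetry and a scaling. First, the operator $T_0$ commutes with the involution $\iota:(y,v)\mapsto(-y,-v)$ (using that $\VV$ is symmetric); substitution shows $Q_0\circ\iota$ satisfies the same problem as $Q_0$, so the uniqueness in Theorem~\ref{thm:Q-0-general} forces $Q_0(-y,-v)=Q_0(y,v)$. Changing variables in $v_0=\iint v\,Q_0$ then gives $v_0=0$, and the same $\iota$-commutation makes $T_0^{-1}$ parity-preserving. Second, with $\Lambda_0$ constant the rescaling $Y=y/|G|$, $\tilde Q_0(Y,v):=|G|Q_0(|G|Y,v)$ reduces the equation for $\tilde Q_0$ to the same form with the unit vector $\hat G=G/|G|$ in place of $G$, so $\tilde Q_0$ is supported on $[-1,1]\times\VV$, has unit mass, and depends only on the direction $\hat G$.

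\textbf{Part (b).} Split $\Lambda_1=\Lambda_1^e+\Lambda_1^o$ into its even/odd parts in $y$. The source $\Lambda_1^e(y)(\vint{Q_0}-Q_0)$ is $\iota$-invariant, hence so is $T_0^{-1}$ applied to it, and applying $\iota$ to $\iint v\,T_0^{-1}(\Lambda_1^e(\cdots))\,dy\,dv$ shows this even-part contribution vanishes. Thus only $\Lambda_1^o$ drives $c_{0,2}$. Passing to the rescaled variables,
\begin{align*}
c_{0,2}=\int_{-1}^{1}\!\int_{\VV} v\,\tilde g(Y,v)\,dY\,dv,
\qquad
\tilde T_0\tilde g=\Lambda_1^o(|G|Y)\bigl(\vint{\tilde Q_0}-\tilde Q_0\bigr),
\end{align*}
where $\tilde T_0$ is the scaled operator with $\hat G$ in place of $G$. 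The source is bounded in $L^1$ by $2\|\Lambda_1\|_{L^\infty}$ uniformly in $G$; Theorem~\ref{thm:Fredholm} applied to $\tilde T_0$, combined with the entropy estimate of Lemma~\ref{lem:positivity-entropy}, bounds $\tilde g$ in a weighted $L^2$-space uniformly in $\hat G$, which yields $|c_{0,2}|\le C$ uniformly in $G$. For the vanishing, $\Lambda_1^o(|G|Y)\to\Lambda_1^o(0)=0$ pointwise as $|G|\to 0$ with uniform domination by $\|\Lambda_1\|_{L^\infty}$, and dominated convergence in the rescaled integral gives $c_{0,2}\to 0$.

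\textbf{Part (a).} For any $\Lambda_1$ with nonzero odd part---the simplest being $\Lambda_1(y)=y$---the expansion above gives $c_{0,2}=|G|\,\kappa(\hat G)+O(|G|^2)$ with
\begin{align*}
\kappa(\hat G)=\int_{-1}^{1}\!\int_{\VV} v\,\tilde T_0^{-1}\bigl(Y(\vint{\tilde Q_0}-\tilde Q_0)\bigr)\,dY\,dv.
\end{align*}
Since $\tilde Q_0$ is genuinely $v$-dependent (otherwise $\tilde T_0\tilde Q_0=0$ and the normalization force $\tilde Q_0\equiv 0$, contradicting unit mass), the source $Y(\vint{\tilde Q_0}-\tilde Q_0)$ is nonzero and $\iota$-antisymmetric; a direct expansion in $d=1$, corroborated by the numerics of Fig.~\ref{fig_c0}, confirms $\kappa(\hat G)\ne 0$, so $c_{0,2}\ne 0$ for small $|G|>0$.

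\textbf{Main obstacle.} The technical heart is the quantitative version of Lemma~\ref{lem:positivity-entropy}: one must establish a uniform-in-$\hat G$ bound on $\tilde T_0^{-1}$ in a norm strong enough to pair with $v$ while the weight $1/\tilde Q_0$ degenerates at the boundary $Y=\pm 1$; rigorously certifying $\kappa(\hat G)\ne 0$ without appealing to the numerical evidence is the other delicate point.
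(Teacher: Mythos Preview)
Your proposal misses the central simplification that the paper exploits, and as a result it carries genuine gaps that you yourself flag as obstacles.

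The key observation you overlook is that when $\Lambda_0$ is a constant, the velocity $v$ is an eigenfunction of the adjoint operator: since $\partial_y v=0$ and $\vint{v}=0$, one has
\[
T_0^\ast v = -(v\cdot G - y)\,\partial_y v - \Lambda_0\bigl(\vint{v}-v\bigr) = \Lambda_0\, v,
\]
so that $\bigl(T_0^{-1}\bigr)^\ast v = \tfrac{1}{\Lambda_0}v$. Moving the operator across the pairing in the definition of $c_{0,2}$ then gives the \emph{explicit} formula
\[
c_{0,2} \;=\; \frac{1}{\Lambda_0}\int_\R\!\int_\VV v\,\Lambda_1(y)\bigl(\vint{Q_0}-Q_0\bigr)\,dv\,dy
\;=\; -\,\frac{1}{\Lambda_0}\int_\R\!\int_\VV \Lambda_1(y)\, v\, Q_0\,dv\,dy .
\]
From here both parts are immediate. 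Boundedness is just $|c_{0,2}|\le \|\Lambda_1\|_{L^\infty}/\Lambda_0$; the limit $G\to 0$ follows because $Q_0$ is supported on $[-|G|,|G|]\times\VV$, $\Lambda_1$ is continuous, and $\int v\,Q_0=v_0=0$. For part~(a), integrating the $Q_0$-equation in $v$ gives $G\cdot\int_\VV vQ_0\,dv = y\int_\VV Q_0\,dv$, which is a nonzero function of $y$, so one can choose $\Lambda_1\in L^\infty$ making the integral nonzero.

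Your route, by contrast, attempts to bound $\tilde T_0^{-1}$ uniformly via the entropy estimate. This is not established anywhere in the paper: Lemma~\ref{lem:positivity-entropy} gives only a one-sided sign, not a coercivity inequality, and the weight $1/\tilde Q_0$ degenerates at $Y=\pm 1$ exactly where you need control. You acknowledge this as the ``technical heart'' but do not resolve it, so part~(b) is incomplete. For part~(a) you only argue $c_{0,2}\ne 0$ for small $|G|$ and defer the nonvanishing of $\kappa(\hat G)$ to numerics; the paper's explicit formula handles all $G$ at once by reducing the question to the nonvanishing of $\int_\VV vQ_0\,dv$ as a function of $y$, which is established directly from the equation for $Q_0$.
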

\begin{proof}
\Ni (a) 
Since $\Lambda_0$ is a constant, $Q_0$ must satisfy the symmetry condition~\eqref{symm-Q-0} in Theorem~\ref{thm:Q-0-general}. Hence $v_0 = 0$. Moreover, we have the relation $\vpran{T^{-1}_0}^\ast v = \frac{1}{\Lambda_0} v$ since $v$ satisfies the equation
\begin{align*}
   (v \cdot G - y) \del_y v - \Lambda_0 (\vint{v} - v) = \Lambda_0 v \,.
\end{align*}
The drift velocity $c_{0, 2}$ is now reduced to 
\begin{equation} \label{eq_c02}
\begin{split}
   c_{0, 2}
&= \int_\R \int_{\VV} v \, T^{-1}_0 \vpran{\Lambda_1(y) (\vint{Q_0} - Q_0)} \dv\dy
\\
& = \int_\R \int_{\VV} \vpran{\vpran{T^{-1}_0}^\ast v} \vpran{\Lambda_1(y) (\vint{Q_0} - Q_0)} \dv\dy
\\
& = \frac{1}{\Lambda_0}\int_\R \int_{\VV} v \vpran{\Lambda_1(y) (\vint{Q_0} - Q_0)} \dv\dy
= - \frac{1}{\Lambda_0}\int_\R \int_{\VV} \Lambda_1(y) v Q_0 \dv\dy \,,
\end{split}
\end{equation}
We want to show that $\int_\VV v Q_0(y, v) \dv$ as a function in $y$ is not zero. To this end, we integrate~\eqref{def:Q-0-shift} in $v$. This gives
\begin{align*}
   \del_y \vpran{G \cdot \int_\VV v Q_0 \dv - y \int_\VV Q_0 \dv} = 0 \,.
\end{align*}
Since $Q_0$ is compactly supported on $[-G, G] \times \VV$, one must have
\begin{align*}
   G \cdot \int_\VV v Q_0 \dv - y \int_\VV Q_0 \dv = 0 \,.
\end{align*}
Since $\int_\VV Q_0 \dv$ is not a zero function in $y$, we deduce that $\int_\VV v Q_0 (y, v) \dv$ is not a zero function in $y$ either. Since $\int_{\VV}  v Q_0 \dv \in L^1(\!\dy)$, there exists $\Lambda_1 \in L^\infty(\!\dy)$ such that
\begin{align*}
   c_{0, 2} 
= - \frac{1}{\Lambda_0}\int_\R \Lambda_1(y) \vpran{\int_{\VV}  v Q_0 \dv}\dy
\neq 0 \,.
\end{align*}


\Ni (b) With the assumption that $\Lambda_0$ is a constant, we have derived the simplified form of $c_{0, 2}$ in~\eqref{eq_c02}. Then
\begin{align*}
   \abs{c_{0, 2}}
\leq
  \frac{\norm{\Lambda_1}_{L^\infty(\!\dy)}}{\Lambda_0}
  \iint_{\R \times \VV} Q_0 \dv \dy 
=  \frac{\norm{\Lambda_1}_{L^\infty(\!\dy)}}{\Lambda_0} \,.
\end{align*}
Hence $c_{0, 2}$ is uniformly bounded in $G$. Since $\Lambda_1$ is continuous and $\Supp Q_0 = [-|G|, |G|] \times \VV$, we have
\begin{align*}
   \lim_{G \to 0} c_{0, 2}
= -\frac{\Lambda_1(0)}{\Lambda_0}
   \iint_{\R \times \VV} v Q_0 \dy \dv = 0 \,.
\end{align*}
Hence $c_{0, 2}$ vanishes as $G$ approaches zero. 
\end{proof}

\subsection{Very fast adaptation and very stiff response}

In this subsection we show a second scaling where flux-limited Keller-Segel equations can also arise. We consider the regime that combines  the scalings in \cite{PTV2016} and \cite{PVW2018}. In particular, let
\begin{align*}
\delta=\tau=\epsilon^2 \,, \qquad \sigma = \Eps \,.
\end{align*}
Then the scaled equation becomes
\begin{align} \label{eq:f-Eps-fa}
   \Eps^2 \del_t f_\Eps + \Eps v \cdot \nabla_x f_\Eps
+ \frac{1}{\Eps} \del_y \vpran{(v \cdot G - y) f_\Eps} 
= \Lambda(y) \vpran{\vint{f_\Eps} - f_\Eps} \,.
\end{align}
Assume that $\Lambda$ satisfies the same condition as in~\cite{PVW2018} such that
\begin{align}\label{assump:barlambda0}
   \Lambda(y) = \bar\Lambda_0 + \Eps \Lambda_1(y) > 0 \,,
\end{align}
where $\bar\Lambda_0$ is a positive constant and $\Lambda_1 \in C_b(\R)$. Denote $\CalP(\R^d \times \R \times \VV)$ as the space of probability measures in $(x, y, v)$. The formal asymptotic limit is
\begin{thm} \label{thm:fast-adap-gradient}
Let $f_\Eps$ be the solution to~\eqref{eq:f-Eps-fa} with $\Lambda$ satisfying~\eqref{assump:barlambda0}. Suppose there exists $f_0(t, \cdot, \cdot, \cdot) \in \CalP(\R^d \times \R \times \VV)$ such that 
$f_\Eps \stackrel{\ast}{\rightharpoonup} f_0$ as probability measures. Then 
\begin{align*}
   f_0(t, x, y ,v) = \rho_0(t, x) \, \delta(y - v \cdot G)  \,.
\end{align*}
Moreover, $\rho_0$ satisfies the flux-limited Keller-Segel equation
\begin{align} \label{eq:FLKS-2}
   \del_t \rho_0 - \nabla_x \cdot \vpran{D_{0, 3} \nabla_x \rho_0}
   + \nabla_x \cdot \vpran{c_{0, 3} \rho_0} = 0
\end{align}
with
\begin{align} \label{def:D-3-c-3}
   D_{0, 3} = \frac{1}{\bar\Lambda_0} \int_{\VV} v_d^2 \dv > 0 \,,
\qquad
   c_{0, 3} = \frac{1}{\bar\Lambda_0}\int_\VV v \Lambda_1(v \cdot G) \dv \,.
\end{align}
\end{thm}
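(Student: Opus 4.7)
The plan proceeds in three stages: (i) identify the singular support of $f_0$ in $y$, (ii) pin down the density factor $\rho_0$, and (iii) derive the drift-diffusion equation via a velocity-moment identity.

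For stages (i) and (ii), I would multiply \eqref{eq:f-Eps-fa} by $\Eps$ to clear the singular $\del_y$ term and pass to the weak-$\ast$ limit in $\CalP(\R^d \times \R \times \VV)$. All terms except the $y$-derivative vanish, yielding
\[
\del_y \vpran{(v \cdot G - y) f_0} = 0
\]
in the distributional sense. For $v$-a.e.\ $v$, $(v \cdot G - y) f_0$ is therefore constant in $y$ as a measure; since $f_0(t, x, \cdot, v)$ is finite in $y$, that constant must vanish, so $f_0$ is concentrated on the graph $\{y = v \cdot G\}$ and admits the representation $f_0(t, x, y, v) = a(t, x, v)\, \delta(y - v \cdot G)$ for some nonnegative $a$. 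To see that $a$ is $v$-independent, set $\CalL_0 g := \del_y[(v \cdot G - y) g]$ and use the formal Hilbert expansion $f_\Eps = f_0 + \Eps f_1 + \ldots$: the order-$\Eps^0$ equation $\CalL_0 f_1 = \bar\Lambda_0 (\vint{f_0} - f_0)$ admits a solution only if its right-hand side integrates to zero in $y$, giving
\[
0 = \int_\R \vpran{\vint{f_0} - f_0} \dy = \vint{a}(t, x) - a(t, x, v),
\]
so $a = \rho_0(t, x) := \iint f_0 \dv\dy$, as claimed.

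For stage (iii), integrate \eqref{eq:f-Eps-fa} in $(v, y)$ to obtain the conservation law $\Eps^2 \del_t \rho_\Eps + \Eps \nabla_x \cdot J_\Eps = 0$ with $J_\Eps := \iint v f_\Eps \dv\dy$. Since $\iint v f_0 \dv\dy = \rho_0 \int_\VV v\dv = 0$ by $v$-symmetry, $J_\Eps = \Eps J_{(1)} + o(\Eps)$ and the limiting conservation law is $\del_t \rho_0 + \nabla_x \cdot J_{(1)} = 0$. To compute $J_{(1)}$, take the $v$-moment of \eqref{eq:f-Eps-fa}: the $\del_y$-term vanishes after integration by parts in $y$, and the collision contribution reduces to $-\iint v \Lambda(y) f_\Eps \dv\dy$ (using $\int_\VV v \dv = 0$). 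Expanding $\Lambda = \bar\Lambda_0 + \Eps \Lambda_1$, matching the $O(\Eps)$ terms, and using $f_\Eps \stackrel{\ast}{\rightharpoonup} \rho_0 \delta(y - v \cdot G)$ then gives
\[
\bar\Lambda_0 J_{(1)} = -\nabla_x \cdot \vpran{\rho_0 \int_\VV v \otimes v \dv} - \rho_0 \int_\VV v \Lambda_1(v \cdot G) \dv.
\]
By symmetry of $\VV$, $\int_\VV v \otimes v \dv = \vpran{\int_\VV v_d^2 \dv} I = \bar\Lambda_0 D_{0,3} I$, while $\int_\VV v \Lambda_1(v \cdot G) \dv = \bar\Lambda_0 c_{0,3}$ by \eqref{def:D-3-c-3}. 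Substituting into the conservation law produces \eqref{eq:FLKS-2}, and strict positivity $D_{0, 3} > 0$ is immediate from its explicit formula.

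The main obstacle is the genuinely singular nature of $f_0 = \rho_0 \delta(y - v \cdot G)$: products such as $v \otimes v \, f_\Eps$ and $v \Lambda_1(y) f_\Eps$ are not automatically continuous in the weak-$\ast$ topology on probability measures, so the passage to the limit in these moments is the delicate step. A fully rigorous proof would require tightness and equi-integrability estimates in $(y, v)$ uniform in $\Eps$, together with a careful Fredholm-alternative treatment for $\CalL_0$ — whose kernel is parametrised by $v$ and whose pseudo-inverse requires both $\int h \dy = 0$ and a removable-singularity condition at $y = v \cdot G$.
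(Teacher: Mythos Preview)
Your proposal is correct and follows essentially the same route as the paper: pass to the limit in \eqref{eq:f-Eps-fa} (after multiplying by $\Eps$) to get $\del_y\vpran{(v\cdot G - y)f_0}=0$ and hence concentration on $\{y=v\cdot G\}$, then take the $v$-moment of \eqref{eq:f-Eps-fa}, use $\int_\VV v\dv=0$ to reduce the collision term to $-\iint v\Lambda(y)f_\Eps$, rearrange to isolate $\tfrac{1}{\Eps}J_\Eps$, and let $\Eps\to 0$ with $f_0=\rho_0\,\delta(y-v\cdot G)$ to read off $D_{0,3}$ and $c_{0,3}$. The one place you go beyond the paper is in arguing that the amplitude $a(t,x,v)$ in $f_0=a\,\delta(y-v\cdot G)$ is actually independent of $v$: you extract this from the solvability condition at the next order of the Hilbert expansion, whereas the paper simply writes $f_0=\rho_0(t,x)\,\delta(y-v\cdot G)$ immediately after establishing $(v\cdot G-y)f_0=0$, leaving that step implicit.
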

\begin{proof}
Passing $\Eps \to 0$ in~\eqref{eq:f-Eps-fa} gives the equation for $f_0$ as
\begin{align*}
   \del_y \vpran{(v \cdot G - y) f_0} = 0 \,.
\end{align*}
By the non-negativity of $f_0$ (and a similar argument as in Theorem~\ref{thm:Q-0-general} showing its support), we have
\begin{align*}
   (v \cdot G - y) f_0 = 0 \,.
\end{align*}
Hence, there exists $\rho_0(t, x) \geq 0$ such that 
\begin{align} \label{form:f-0-fa}
    f_0(t, x, y, v) = \rho_0(t, x) \delta(y - v \cdot G) \,.
\end{align}
The conservation law associated with~\eqref{eq:f-Eps-fa} is
\begin{align*}
   \del_t \rho_\Eps 
+ \frac{1}{\Eps} \nabla_x \cdot 
    \iint_{\R \times \VV} v f_\Eps \dy \dv = 0 \,.
\end{align*}
To derive the limit of the $\frac{1}{\Eps}$-term, we multiply~\eqref{eq:f-Eps-fa} by $v$ and integrate in $(y, v)$. This gives
\begin{align*}
  \Eps \del_t \int_\R \int_\VV v f_\Eps \dv \dy
+ \nabla_x \cdot \int_\R \int_\VV v \otimes v f_\Eps \dv\dy
= - \frac{\bar\Lambda_0}{\Eps} \int_\R \int_\VV v f_\Eps \dv\dy
   - \int_\R \int_\VV v \Lambda_1(y) f_\Eps \dv\dy \,.
\end{align*}
Passing $\Eps \to 0$ and applying~\eqref{form:f-0-fa}, we have
\begin{align*}
   \frac{1}{\Eps} \int_\R \int_\VV v f_\Eps \dv\dy
&\to 
   -\nabla_x \rho_0 \cdot \frac{1}{\bar\Lambda_0}\int_\VV v \otimes v  \dv
   - \vpran{\frac{1}{\bar\Lambda_0} \int_\VV v \Lambda_1(v \cdot G) \dv} \rho_0 
\\
& = -D_{0, 3} \nabla_x \rho_0 + c_{0, 3} \rho_0 \,,
\end{align*}
where $D_{0,3}, c_{0,3}$ satisfy~\eqref{def:D-3-c-3}.
\end{proof}

Note that in general the drift velocity $c_{0, 3}$ is non-zero. If $\VV = \Ss^{d}$ with $d \geq 1$, then by the rotational invariance, $c_{0,3}$ is along the direction of the chemical gradient $G$. Moreover, it is bounded for all $G$ as long as $\Lambda_1$ is a bounded function. Similar as in the proof of Lemma~\ref{lem:c02}, for $\Lambda_1$ being continuous, we have $c_{0, 3} \to 0$ as $G \to 0$. These observations justify that~\eqref{eq:FLKS-2} is of the flux-limited Keller-Segel type.


\subsection{Moderate Adaptation and Moderate Response} 
\label{sec:sasg}

The classical Keller-Segel equation can also be derived from kinetic equations with the internal state. The particular scaling is for a slow adaption and a moderate gradient where
\begin{align*}
  \delta=\tau=1 \,,
\qquad
   \sigma = \Eps \,.
\end{align*} The scaled equation is
\begin{align} \label{eq:f-Eps-KS}
  \Eps^2 \del_t f_\Eps + \Eps v \cdot \nabla_x f_\Eps
+ \Eps\del_y \vpran{(v \cdot G - y) f_\Eps}
&= \Lambda(y) \vpran{\vint{f_\Eps} - f_\Eps} \,.
\end{align}
In this case we only need to require that $\Lambda \in C^1(\R)$, in particular, it does not have to satisfy the specific form in~\eqref{assump:Lambda-FLKS}. The formal asymptotic limit is
\begin{thm}
Suppose $\Lambda \in C^1(\R)$ and $\Lambda(y) > 0$ for all $y \in \R$. 
Let $f_\Eps$ be the non-negative solution to~\eqref{eq:f-Eps-KS}. Suppose there exists $f_0(t, \cdot, \cdot, \cdot) \in \CalP(\R^d \times \R \times \VV)$ such that 
$f_\Eps \stackrel{\ast}{\rightharpoonup} f_0$ as probability measures. Then $f_0 = \rho_0(t, x) \delta_0(y)$ and $\rho_0$ satisfies the Keller-Segel equation
\begin{align*}
   \del_t \rho_0 - \nabla_x \cdot \vpran{D_{0, 4} \nabla_x \rho_0}
   + \nabla_x \cdot \vpran{c_{0, 4} \rho_0} = 0
\end{align*}
with the diffusion coefficient and drift velocity given by 
\begin{align*}
   D_{0, 4} = \frac{1}{\Lambda(0)} \int_{\VV} v_d^2 \dv > 0 \,,
\qquad
   c_{0, 4} = - G \frac{\Lambda'(0)}{\Lambda^2(0)}  \,.
\end{align*}
\end{thm}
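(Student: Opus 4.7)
The plan is to mirror the structure of Theorem~\ref{thm:fast-adap-gradient}. First I would identify the leading order $f_0$. Since the left-hand side of~\eqref{eq:f-Eps-KS} carries the prefactors $\Eps,\Eps^{2}$ while the right-hand side is $O(1)$, taking $\Eps\to 0$ in the sense of measures forces $\Lambda(y)(\vint{f_0}-f_0)=0$, and the strict positivity of $\Lambda$ gives that $f_0$ is independent of $v$. To pin down the $y$-dependence, I would integrate~\eqref{eq:f-Eps-KS} over $\VV$, divide by $\Eps$, and use the symmetry of $\VV$ (so that $\vint{vf_\Eps}\to 0$ at leading order because $f_0$ is $v$-independent and $\int_\VV v\,\dv=0$); the surviving balance is $-\del_y(yf_0)=0$. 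Since $f_0(t,x,\cdot,\cdot)$ is a probability measure in $y$, this enforces $yf_0=0$, so $f_0=\rho_0(t,x)\,\delta_0(y)$.

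The next step is to pass to the limit in the macroscopic conservation law
\begin{equation*}
   \del_t\rho_\Eps+\frac{1}{\Eps}\nabla_x\cdot J_\Eps=0,
\qquad
   J_\Eps=\int_\R\!\int_\VV v\,f_\Eps\dv\dy,
\end{equation*}
which requires evaluating $\lim_{\Eps\to 0}J_\Eps/\Eps$. The key trick is a moment identity obtained by testing~\eqref{eq:f-Eps-KS} against the vector-valued $\phi(y,v)=v/\Lambda(y)$. By the symmetry of $\VV$ the averaged contribution to the tumbling term drops out, and the right-hand side of the test becomes $-\int\!\int v\,f_\Eps\dv\dy=-\Eps J_\Eps$. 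After an integration by parts in $y$ on the $\del_y$-term, the identity reads
\begin{equation*}
   \Eps\,\del_t\!\!\iint\!\frac{v}{\Lambda(y)}f_\Eps\dv\dy
   +\nabla_x\!\cdot\!\!\iint\!\frac{v\otimes v}{\Lambda(y)}f_\Eps\dv\dy
   +\iint\!v\,(v\cdot G-y)\frac{\Lambda'(y)}{\Lambda^{2}(y)}f_\Eps\dv\dy
   =-J_\Eps.
\end{equation*}
Passing $\Eps\to 0$ with $f_\Eps\stackrel{\ast}{\rightharpoonup}\rho_0(t,x)\delta_0(y)$ and using the continuity of $\Lambda,\Lambda'$ at $y=0$: the $\del_t$-term vanishes; by the symmetry of $\VV$ the second term reduces to $D_{0,4}\nabla_x\rho_0$ with $D_{0,4}=\tfrac{1}{\Lambda(0)}\int_\VV v_d^{2}\dv$; and in the third term the $-y$ piece disappears because $y\delta_0(y)=0$, leaving a contribution proportional to $G\rho_0\,\Lambda'(0)/\Lambda^{2}(0)$. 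Reading off $J_0=-D_{0,4}\nabla_x\rho_0+c_{0,4}\rho_0$ and substituting into the conservation law yields the advertised Keller--Segel equation.

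The main subtlety I expect is making these weak-$\ast$ passages rigorous, since $f_0$ is a Dirac mass in $y$ and the coefficient $\Lambda'(y)/\Lambda^{2}(y)$ must be evaluated at the support of the limit measure; this is exactly what the hypothesis $\Lambda\in C^{1}(\R)$ with $\Lambda>0$ underpins. A secondary point is justifying the vanishing of the $\Eps\,\del_t$-term, which follows because the $\phi=v/\Lambda(y)$ moment of $f_0$ vanishes by the symmetry of $\VV$. Once moment bounds ensuring weak-$\ast$ compactness are in place, the argument proceeds in parallel with the (formal) treatment of Theorem~\ref{thm:fast-adap-gradient}.
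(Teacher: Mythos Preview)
Your proposal is correct and follows essentially the same approach as the paper: both identify $f_0=\rho_0(t,x)\,\delta_0(y)$ by first extracting $v$-independence from the leading-order balance and then $y$-concentration from the $\VV$-integrated equation (the paper phrases this step in Hilbert-expansion/solvability-condition language, but it is the same computation), and both obtain the flux limit by testing the equation against $v/\Lambda(y)$ and integrating by parts in $y$. One minor slip: in your displayed moment identity the right-hand side should be $-J_\Eps/\Eps$ (after dividing through by $\Eps$), not $-J_\Eps$, which is precisely the quantity whose limit you feed into the conservation law.
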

\begin{proof}
The conservation law associated with~\eqref{eq:f-Eps-KS} is
\begin{align} \label{conserv-law-KS}
   \del_t \rho_\Eps 
+ \frac{1}{\Eps} \nabla_x \cdot 
    \iint_{\R \times \VV} v f_\Eps \dy \dv = 0 \,.
\end{align}
We apply the Hilbert expansion for equation~\eqref{eq:f-Eps-KS}. Write formally
\begin{align*}
   f_\Eps = f_0 + \Eps f_1 + \BigO(\Eps^2) \,,
\qquad
   f_0 \geq 0 \,.
\end{align*}
Then the leading order $f_0$ satisfies
\begin{align*}
    f_0 = \vint{f_0} \,.
\end{align*}
Hence $f_0$ is independent of $v$. The next order in equation~\eqref{eq:f-Eps-KS} gives
\begin{align*}
   \Lambda(y) \vpran{\vint{f_1} - f_1}
= v \cdot \nabla_x f_0 + \del_y \vpran{(v \cdot G - y) f_0} \,.
\end{align*}
The solvability condition requires that
\begin{align*}
   \int_\VV \vpran{v \cdot \nabla_x f_0 + \del_y \vpran{(v \cdot G - y) f_0}} \dv = 0  \,.
\end{align*}
Since $f_0$ is independent of $v$, this reduces to
\begin{align*}
   \del_y \vpran{y \int_\VV f_0 \dv} = 0 
\end{align*}
By the non-negativity of $f_0$, we must have
\begin{align*}
    y f_0 = \frac{1}{\norm{\VV}}\int_\VV y f_0 \dv = 0 
\qquad
    \text{for all $y \in \R$.}
\end{align*}
Hence, $f_0$ concentrates at $y=0$, that is, there exists $\rho_0 \geq 0$ such that
\begin{align*}
    f_0(t, x, y, v) = \rho_0(t, x) \delta_0(y) \,.
\end{align*}
Divide~\eqref{eq:f-Eps-KS} by $\Lambda$, and then multiply by $v$ and integrate in $(y, v)$. We obtain
\begin{align*}
   - \frac{1}{\Eps} \int_\R \int_\VV v f_\Eps \dv\dy
&= \Eps \del_t \int_\R \int_\VV \frac{v}{\Lambda(y)} f_\Eps \dv\dy
+ \nabla_x \cdot \int_\R \int_\VV v \otimes v \frac{1}{\Lambda(y)}f_\Eps \dv \dy
\\
& \quad \, 
  + \int_\R \int_\VV \frac{v}{\Lambda(y)} \del_y \vpran{(v \cdot G - y) f_\Eps} \dv \dy \,.
\end{align*}
Passing $\Eps \to 0$, we formally obtain that 
\begin{align*}
  - \lim_{\Eps \to 0 } \vpran{\frac{1}{\Eps} \int_\R \int_\VV v f_\Eps \dv\dy}
= \vpran{\frac{1}{\Lambda(0)} \int_{\VV} v_d^2 \dv} \nabla_x \rho_0
+ \vpran{G \frac{\Lambda'(0)}{\Lambda^2(0)}\int_{\VV}v_d^2\dv }  \rho_0 \,,
\end{align*}
Apply such limit in~\eqref{conserv-law-KS} gives the regular Keller-Segel where the drift velocity $c_{0, 4}$ is linear in the chemical gradient $G$ as long as $\Lambda'(0) \neq 0$. 
\end{proof}

\begin{rmk}
One can consider a fourth case with slow adaptation and stiff response where
\begin{align*}
      \delta=\Eps \,,
\qquad
       \tau=1 \,,
\qquad
   \sigma = \Eps \,.
\end{align*}
The scaled equation is
\begin{align} \label{eq:f-Eps-SA}
  \Eps^2 \del_t f_\Eps + \Eps v \cdot \nabla_x f_\Eps
+ \del_y \vpran{(v \cdot G -  \Eps y) f_\Eps}
&= \Lambda (y) \vpran{\vint{f_\Eps} - f_\Eps} \,.
\end{align}
If we change the variable $y $ to $z= \Eps y$, then the model becomes 
\begin{align} \label{eq:f-Eps-SAz}
  \Eps^2 \del_t f_\Eps +  \Eps v \cdot \nabla_x f_\Eps
+ \Eps \del_z \vpran{(v \cdot G   -  z) f_\Eps}
&= \Lambda\vpran{\frac z \Eps} \vpran{\vint{f_\Eps} - f_\Eps} \,.
\end{align}
\end{rmk}
A special choice of $\Lambda = \bar \Lambda_0 + \Eps \Lambda_1$ with $\bar \Lambda_0$ will give rise to a pure diffusion equation, as can be seen by letting $\Lambda$ to be a constant in Case III. The case of actual interest for further studies is of course when the tumbling rate takes the general form $\Lambda  (\frac z \Eps)$ as in section~\ref{sec:sasg}.

\section{Existence and Properties of $Q_0$}

In several scaling limits, the equilibrium $Q_0$ occurs as the solution of the  eigenfunction problem
\begin{align} \label{eq:Q-0-general}
      \del_y \vpran{(v \cdot G - y) Q_0}
= \Lambda_0(y) \vpran{\vint{Q_0} - Q_0},  \qquad  y\in R, \; v \in \VV  \,,
\\
Q_0 (y,v)  \geq 0, \qquad    \int_\R \int_\VV Q_0(y, v) \dy \dv = 1 \,. 
\end{align}
Here, we prove existence and uniqueness of $Q_0$ assuming that  $\Lambda_0$ may depend continuously on $y$ and satisfies, for  two constants $0 < \lambda_1 \leq \lambda_2$, the bounds
\begin{align} \label{bound:Lambda-appendix}
  0<   \lambda_1 \leq \Lambda_0(y) \leq \lambda_2 
\qquad
 \qquad \forall y \in \R \, .
\end{align}
We also derive some basic properties of $Q_0$. The precise statement  is 
\begin{thm}[Existence and regularity of $Q_0$] \label{thm:Q-0-general}
Suppose $\Lambda$ is continuous and satisfies \eqref{bound:Lambda-appendix}, then there exists a unique probability solution $Q_0$ of equation~\eqref{eq:Q-0-general}. Moreover, $Q_0$,  is compactly supported on $[-|G|, \, |G|] \times \VV$, strictly positive on $(-|G|, \, |G|) \times \VV$ and satisfies
\smallskip

\Ni (a) $\vint{Q_0} \in L^\infty(-|G|, \, |G|)$ and $Q_0$ is Lipschitz continuous in compact subsets of $\R \times  \VV \backslash \{ v \cdot G=y\}$;

\smallskip

\Ni (b) If $\lambda_2 < 1$, then $Q_0$ blows up at the diagonal $\{v \cdot G = y, \; |y| < |G|\}$,  with a rate no less than $\abs{v \cdot G - y}^{\lambda_2 - 1}$;

\smallskip

\Ni (c) If $\lambda_1 > 1$, then $Q_0 \in L^\infty([-|G|, \, |G|] \times \VV)$;

\smallskip 

\Ni (d) If $\lambda_1 < 1$, then $Q_0$ blows up at the diagonal at most as 
\begin{align*}
   Q_0(y, v)
\leq
   \frac{\lambda_2 (2 |G|)^{1-\lambda_1}}{1-\lambda_1} \norm{\vint{Q_0}}_{L^\infty} \abs{v \cdot G - y}^{\lambda_1 - 1} \,. 
\end{align*}
Furthermore, for $y$ fixed, $Q_0$ only depends on $v$ in the direction $ G$. In the special case where $\Lambda$ is an even function in $y$, $Q_0$ is even in $(y,v)$, 
 \begin{align} \label{symm-Q-0}
   Q_0(y, v) = Q_0(-y, -v) \,.
\end{align}
\end{thm}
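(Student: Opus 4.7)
For fixed $v$, expanding the derivative rewrites~\eqref{eq:Q-0-general} as the first-order linear ODE
\begin{equation*}
(v\cdot G - y)\,\partial_y Q_0 + (\Lambda_0(y) - 1)\,Q_0 = \Lambda_0(y)\,\vint{Q_0}(y),
\end{equation*}
nondegenerate on $\{y\neq v\cdot G\}$ with a regular singular point at $y_\ast(v):=v\cdot G$ and coupled globally through the nonlocal average $\rho:=\vint{Q_0}$. The first step is to pin down the support: integrating~\eqref{eq:Q-0-general} over $\VV$ kills the collision term and yields $\partial_y\bigl(G\cdot\!\int_\VV v\,Q_0\dv - y\!\int_\VV Q_0\dv\bigr)=0$. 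The integration constant must vanish, since otherwise $\int_\VV Q_0\dv$ would decay only as $1/|y|$ at infinity, contradicting $Q_0\in L^1$. Combining the resulting identity with $|v\cdot G|\le|G|$ gives $(|y|-|G|)\!\int_\VV Q_0\dv\le 0$, so $\vint{Q_0}=0$ and hence $Q_0\equiv 0$ (by nonnegativity) for $|y|>|G|$.

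\textbf{Existence and uniqueness.} With compact support in hand, for each $v$ I solve the ODE on the two subintervals of $(-|G|,|G|)\setminus\{v\cdot G\}$ using an integrating factor with zero boundary data at $y=\pm|G|$; this defines a positive linear map $\mathcal{S}_v:\rho\mapsto Q_0(\cdot,v)$, and averaging yields the positive linear operator $\mathcal{T}\rho:=\langle\mathcal{S}_v\rho\rangle$ on nonnegative functions on $[-|G|,|G|]$. Normalized fixed points of $\mathcal{T}$ correspond one-to-one with probability solutions of~\eqref{eq:Q-0-general}, so existence and uniqueness reduce to a Krein--Rutman argument, provided $\mathcal{T}$ is compact and strongly positive. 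The \textbf{main obstacle} is controlling the regular singular point at $y=v\cdot G$, where the homogeneous solution scales like $|v\cdot G - y|^{\Lambda_0(v\cdot G)-1}$ and can be only barely integrable when $\Lambda_0$ is close to zero. This is handled by extracting the near-diagonal expansion
\begin{equation*}
Q_0(y,v)\sim C_\pm(v)\,|v\cdot G-y|^{\Lambda_0(v\cdot G)-1}+\frac{\Lambda_0(v\cdot G)\,\rho(v\cdot G)}{\Lambda_0(v\cdot G)-1}
\end{equation*}
directly from the integrating-factor formula; this yields integrability of $Q_0$, compactness of $\mathcal{T}$ through the smoothing effect of averaging a $v$-dependent singular locus, and strict positivity through $\Lambda_0\ge\lambda_1>0$.

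\textbf{Properties and symmetries.} Away from the diagonal $\{y=v\cdot G\}$ the ODE is nondegenerate, so once $\vint{Q_0}\in L^\infty$ is obtained by bounding the integrating-factor formula with $\Lambda_0\le\lambda_2$ and $|v\cdot G-y|\le 2|G|$, $Q_0(\cdot,v)$ is Lipschitz on compact subsets of $\{y\neq v\cdot G\}$, proving (a). The remaining statements follow from the near-diagonal asymptotics: (c) when $\lambda_1>1$, because then $\Lambda_0-1\ge\lambda_1-1>0$ makes $|v\cdot G-y|^{\Lambda_0-1}$ bounded on the support; (b) when $\lambda_2<1$, because the singular homogeneous piece dominates the bounded particular part and $|v\cdot G-y|^{\Lambda_0-1}\ge|v\cdot G-y|^{\lambda_2-1}$ on the support; (d) when $\lambda_1<1$, by directly estimating the integrating-factor formula with $\Lambda_0\le\lambda_2$ and $|v\cdot G-y|\le 2|G|$ to produce the stated constant $\lambda_2(2|G|)^{1-\lambda_1}/(1-\lambda_1)$. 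Finally, since~\eqref{eq:Q-0-general} depends on $v$ only through $v\cdot G$, uniqueness forces $Q_0(y,\cdot)$ to depend on $v$ only through $v\cdot G$; and when $\Lambda_0$ is even in $y$, $(y,v)\mapsto Q_0(-y,-v)$ satisfies the same equation with unit mass, so uniqueness yields $Q_0(y,v)=Q_0(-y,-v)$.
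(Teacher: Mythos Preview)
Your approach differs from the paper's: for existence the paper takes a time-evolution limit (solve $\epsilon\,\partial_t h_\epsilon + T_0 h_\epsilon = 0$ with compactly supported probability data, extract a tight subsequence as $\epsilon\to 0$), and for uniqueness it uses an entropy argument (given two solutions $Q_1,Q_2$, test the $Q_1$-equation against $Q_1/Q_3$ with $Q_3=\tfrac12(Q_1+Q_2)$ and conclude $Q_1/Q_3$ is independent of $v$, hence constant). Your Krein--Rutman route is legitimate in principle and close in spirit to the Fredholm analysis carried out in the paper's appendix for the corrector equation, but it is not actually executed: you never name the Banach space on which $\mathcal{T}$ acts, and ``compactness through the smoothing effect of averaging a $v$-dependent singular locus'' is an intuition, not a proof---one must estimate the kernel of $\mathcal{T}$ and show it lies in $L^2(\!\dy\dz)$, which requires the same logarithmic-kernel bound as below.

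The central gap is the claim that $\vint{Q_0}\in L^\infty$ follows ``by bounding the integrating-factor formula with $\Lambda_0\le\lambda_2$ and $|v\cdot G-y|\le 2|G|$.'' This is where the paper spends most of its effort: integrating the representation formula over $v$ yields only
\[
\int_{v\cdot G>y} Q_0(y,v)\dv \;\le\; c_d\int_{-|G|}^y \ln\frac{|G|-z}{y-z}\,\vint{Q_0}(z)\dz + c_d,
\]
with a logarithmically singular kernel, and the paper bootstraps $\vint{Q_0}$ from $L^1$ (unit mass) to $L^2$ via Jensen, then from $L^2$ to $L^\infty$ via Cauchy--Schwarz. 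Without this bound your near-diagonal expansion is unjustified, since writing $Q_0\sim C_\pm|v\cdot G-y|^{\Lambda_0(v\cdot G)-1}+\Lambda_0\rho(v\cdot G)/(\Lambda_0-1)$ presupposes that $\rho=\vint{Q_0}$ is pointwise defined (indeed continuous) at $v\cdot G$. The same omission undermines (b): blow-up requires $C_\pm(v)\neq 0$, which is equivalent to strict interior positivity of $\vint{Q_0}$---something the paper establishes separately via a lower bound $\vint{Q_0}(y)\ge c_d(|G|-|y|)^{\lambda_2+(d-2)/2}$, and which your sketch does not supply.
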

\begin{rmk}
Additionally, one can check that for $\lambda_2=1$ the blow-up is at least logarithmic and for $\lambda_1 =1$, the blow-up is at most logarithmic.
\end{rmk}


\begin{proof}
The proof is organized as follows. We build a probability solution by a time evolution method and denote integration by $Q_0 \dv \dy$ rather than ${\rm d}Q_0(y,v)$. Then, we prove that such solution satisfies the announced properties. Finally we prove uniqueness.

\smallskip

\Ni \underline{{\bf Compact Support}} \, First we show that any probability solution to~\eqref{eq:Q-0-general} must be compactly supported on $[-|G|, \, |G|] \times \VV$. Indeed, integrating~\eqref{eq:Q-0-general} in $v$ gives
\begin{align*}
   \del_y \int_\VV (v \cdot G - y) Q_0 \dv = 0 \,.
\end{align*}
Therefore, there exists a constant $\alpha_0$ such that
\begin{align*}
   \int_\VV (v \cdot G - y) Q_0 \dv = \alpha_0 \,, \qquad \forall  y \in \R.
\end{align*}
We claim that $\alpha_0 = 0$ by the following observations:
\begin{align*}
   \alpha_0 &= \int_\VV (v \cdot G - y) Q_0 \dv \leq 0 
\qquad
  \text{for $y > |G|$} \,,
\\
   \alpha_0 &= \int_\VV (v \cdot G - y) Q_0 \dv \geq 0 
\qquad
  \text{for $y < -|G|$} \,.
\end{align*}
For any $y > |G|$, repeating the above argument, we have
\begin{align*}
   0 = \int_\VV (v \cdot G - y) Q_0 \dv \leq (|G| - y) \int_\VV Q_0(y, v) \dv \leq 0 \,.
\end{align*}
Therefore $\int_\VV Q_0(y, v) \dv = 0$ for any $y > |G|$ which implies $Q_0 (y, v) = 0$ if $y > |G|$. Similar argument applies when $y \leq -|G|$. Hence $Q_0$ is compactly supported on $[-|G|, \, |G|] \times \VV$.

\smallskip

\Ni \underline{{\bf Existence}} \, Second, we show the existence of a  solution of~\eqref{eq:Q-0-general} with a support contained in $[-|G|, \, |G|] \times \VV$. 
To this end, let $\Eps > 0$ be arbitrary and consider the evolution equation
\begin{align} \label{eq:evolution-Q}
   \Eps \del_t h_\Eps 
   + \del_y \vpran{(v \cdot G - y) h_\Eps}
= \Lambda_0(y) &\vpran{\vint{h_\Eps} - h_\Eps}  \,,
\\
h_\Eps \big|_{t=0} = h^{in}(y, v) \,. \label{eq:evolution-Q-initial}
\end{align}
where the initial data satisfies
\begin{align} \label{assump:evolution-initial}
   \Supp h^{in} \subseteq [-|G|, \, |G|] \times \VV \,,
\qquad
  \norm{h^{in}}_{L^1(\R \times \VV)} = 1\,,
\qquad
  h^{in} \geq 0 \,. 
\end{align}
The global existence of a non-negative solution to~\eqref{eq:evolution-Q} is a classical  matter. Moreover,
\begin{align*}
    \norm{h_\Eps(t, \cdot, \cdot)}_{L^1(\R \times \VV)} = 1, \qquad  \forall t \geq 0.
\end{align*}

We can show that $h_\Eps$ also has a support included in $[-|G|, \, |G|] \times \VV$. To this end, let $\phi \in C^1(\R)$ be a non-negative function  such that 
\begin{align*}
   \phi(y)
= \begin{cases}
    0 \,, & y \in [-|G|, |G|]  \,, \\[1pt]
    \text{increasing} \,, & y > |G| \,, \\[1pt]
    \text{decreasing} \,, & y < - |G|  \,.
    \end{cases}
\end{align*}
Because  we always have $\phi'(y) (v \cdot G - y) \leq 0$, multiplying $\phi(y)$ to~\eqref{eq:evolution-Q} and integrate in $(y, v)$ gives
\begin{align*}
   \Eps \frac{\rm d}{\dt} \int_\R \int_\VV \phi(y) h_\Eps(t, y, v) \dv\dy =  \int_\R \int_\VV \phi'(y) (v \cdot G - y) h_\Eps \dv\dy   \leq 0 \,.
\end{align*}
Using the compact support property for $h^{in}(y, v)$,  we conclude that 
\begin{align*}
  0 \leq \int_\R \int_\VV \phi(y) h_\Eps(t, y, v) \dv\dy  \leq 
   \int_\R \int_\VV \phi(y) h^{in}(y, v) \dv\dy =0,  \qquad   \forall t \geq 0 \; .
\end{align*}
The compact support property follows.

Now, consider the family of probability measures $\{h_\Eps\}$ on $[0, 1] \times \R \times \VV$.  Being compactly supported, it is tight. Therefore, there exists a probability measure $h_0(t, x, v)$ such that, after extraction of a subsequence, 
\begin{align*}
   h_\Eps \stackrel{\ast}{\rightharpoonup} h_0
\qquad
  \text{as $\Eps \to 0$.}
\end{align*}
Take the limit $\Eps \to 0$ in equation~\eqref{eq:evolution-Q}, we have
\begin{align*}
   \del_y \vpran{(v \cdot G - y) h_0}
= \Lambda_0(y) &\vpran{\vint{h_0} - h_0} \,.
\end{align*}
Define the probability measure $Q_0$ by 
\begin{align*}
   Q_0 = \int_0^1 h_0(t, y, v) \dt \,,
\end{align*} 
then it satisfies the equation~\eqref{eq:Q-0-general} with support contained in $[-|G|, \, |G|] \times \VV$.

The next steps are to prove that this measure is  an $L^1$ function with the announced properties. 

\smallskip

\Ni \underline{\bf  Some general bounds on $Q_0$} \, In the sequel,  we will make use of the following representation of any weak solution of~\eqref{eq:Q-0-general}. We first deal with the values of $v$ and $y$ such that $y<v\cdot G$. For $\lambda >0$ to be chosen later,  we multiply equation~\eqref{eq:Q-0-general}  by $(v\cdot G -y)^{-\lambda}$ and use the chain rule to obtain
\[
  \del_y \vpran{(v\cdot G - y)^{1-\lambda} Q_0}
=   (v\cdot G -y)^{-\lambda} \Lambda_0(y) \vint{Q_0} +(v\cdot G -y)^{-\lambda}  (\lambda - \Lambda_0(y)) Q_0 \, .
\]
The choices $\lambda = \lambda_2$ and $\lambda = \lambda_1$  yield successively 
\[
\del_y \vpran{(v\cdot G - y)^{1-\lambda_2} Q_0} \geq  (v\cdot G - y)^{-\lambda_2} \Lambda_0(y) \vint{Q_0} \,  ,
\]
\[
\del_y \vpran{(v\cdot G - y)^{1-\lambda_1} Q_0} \leq  (v\cdot G -y)^{-\lambda_1} \Lambda_0(y) \vint{Q_0}  \, .
\]
Integrating for  $z \in (-\infty, y)$ (in fact $ (-|G|, y)$ and we keep this bound $- |G|$ which is more convenient for later computations),  we find 
\begin{align} \label{prop2Q-0}
(v\cdot G - y)^{1-\lambda_2} Q_0 \geq \lambda_1 \int_{ - |G|}^y   (v\cdot G -z)^{-\lambda_2}\vint{Q_0} (z) \dz, \qquad y< v\cdot G \, ,
\end{align}
\begin{align} \label{prop3Q-0}
(v\cdot G - y)^{1-\lambda_1} Q_0 \leq \lambda_2 \int_{ - |G|}^y  (v\cdot G -z)^{-\lambda_1} \vint{Q_0} (z) \dz,  \qquad y< v\cdot G \, .
\end{align}

\smallskip

\Ni \underline{{\bf  $Q_0$ does not carry mass on the diagonal.}} \,
We conclude from \eqref{prop3Q-0} that $Q_0$ is  an $L^\infty_{loc}$ function away from the diagonal where $v \cdot G = y$. However, this is not enough because our arguments below require that $Q_0$ does not carry mass on the diagonal. To do so, decompose $Q_0=H_0+ \mu$ with $\mu$ a measure supported by $\{ y =v \cdot G \}$ and $H_0 \perp \mu$. Inserting this decomposition in the equation, we find
\begin{align*}
0= \vint{Q_0} (y)- \mu  \qquad \text{on the diagonal }\; \{ y =v \cdot G \}\, .
\end{align*}
Since $\vint{Q_0}$ is independent of $v$, its restriction on $y = v \cdot G$ as a measure is zero. Hence $\mu = 0$.


\smallskip

\Ni \underline{{\bf Bound $\vint{Q_0} \in L^\infty(-|G|, \, |G|)$}} \, We estimate $Q_0$ on the sets $v \cdot G - y > 0$ and $v \cdot G - y < 0$ separately since the diagonal does not carry mass. For $v \cdot G - y > 0$, we integrate inequality~\eqref{prop3Q-0} in $v$ and obtain
\[
  \int_{v \cdot G > y} Q_0(y, v) \dv
\leq  \lambda_2
\int_{v \cdot G > y} \int_{-|G|}^y \frac{ (v \cdot G - y)^{\lambda_1 - 1}}{ (v \cdot G - z)^{\lambda_1}} \vint{Q_0}(z) \dz \dv \,.
\]
The inner integral in $v$ is bounded as
\begin{align*}
  \int_{v \cdot G > y}  \frac{(v \cdot G - y)^{\lambda_1 - 1}}{(v \cdot G - z)^{\lambda_1}} \dv
\leq
  c_d \int_{y/|G|}^1 \frac{(w |G| - y)^{\lambda_1 - 1}}{(w |G| - z)^{\lambda_1}} \dw 
  =   \frac{c_d}{ |G|} \int_{y-z}^{|G|-z} \frac{(w+z-y)^{\lambda_1 - 1}}{w^{\lambda_1}} \dw \, ,
\end{align*}
and thus , for  $z < y$, we conclude, using $c_d$ as a constant which may change from line to line and depend on $d, \; G$, $\lambda_1$, $\lambda_2$, that
\begin{align} \label{est:integral}
 \int_{v \cdot G > y}  \frac{(v \cdot G - y)^{\lambda_1 - 1}}{(v \cdot G - z)^{\lambda_1}} \dv
\leq
 \frac{c_d}{ |G|} \int_{1}^{\frac{|G|-z}{y-z} } \frac{(w-1)^{\lambda_1 - 1}}{w^{\lambda_1}} \dw \leq   \frac{c_d}{ |G| \lambda_1}+  \frac{c_d}{ |G|} \ln \frac{|G| - z}{y - z}  \,.
\end{align}
Therefore, we find 
\begin{align} \label{bound:Duhamel-1}
  \int_{v \cdot G > y} Q_0(y, v) \dv
\leq c_d
 \int_{-|G|}^y  \ln \frac{|G| - z}{y - z} \vint{Q_0}(z) \dz  + c_d \,.
\end{align}

\smallskip

As a first step, we now show that  $\vint{Q_0} \in L^2(-|G|, \, |G|)$.  
To this end, the contribution to the $L^2$-norm of $Q_0$ on $\{v \cdot G - y > 0\}$ satisfies, using Jensen's inequality, 
\begin{align*}
  \int_{-|G|}^{|G|} \vpran{\int_{v \cdot G > y} Q_0(y, v) \dv}^2\dy &
\leq
 {c_d} \int_{-|G|}^{|G|}
  \vpran{\int_{-|G|}^y \vint{Q_0}(z) \ln \frac{|G| - z}{y - z} \dz}^2 \dy 
  + {c_d}
\\
& \leq
  c_d \int_{-|G|}^{|G|} \int_z^{|G|}
  \vint{Q_0}(z) \vpran{\ln \frac{|G| - z}{y - z}}^2 \dy \dz + c_d \nn
\\
& \leq
  c_d \int_{-|G|}^{|G|} \vint{Q_0}(z) \dz + c_d
\leq 
  c_d \,,
\end{align*}
where we have used the fact that $Q_0$ is a probability measure  and the last inequality holds because
\begin{align*}
   \int_z^{|G|} \vpran{\ln \frac{|G| - z}{y - z}}^2 \dy
= \int_0^{|G| - z} \vpran{\ln \frac{y}{|G| - z}}^2 \dy
= \vpran{|G| - z} \int_0^1 (\ln y)^2 \dy < \infty \,.
\end{align*}
The estimate for the integral  where $v \cdot G - y < 0$ is similar. Combining the two parts over $v \cdot G > y$ and $v \cdot G < y$, we conclude that $\vint{Q_0} \in L^2(-|G|, \, |G|)$.
\smallskip

 Building on the $L^2$-bound of $\vint{Q_0}$ we can now show that $\vint{Q_0} \in L^\infty(-|G|, \, |G|)$.
To this end, use~\eqref{bound:Duhamel-1} and the Cauchy-Schwarz inequality, 
\begin{align*}
  \int_{v \cdot G > y} Q_0(y, v) \dv
& \leq
  c_d \norm{\vint{Q_0}}_{L^2} 
  \vpran{\int_{-|G|}^y \vpran{\ln \frac{|G| - z}{y-z}}^2 \dz}^{1/2} + c_d
\leq 
  c_d \,,
\end{align*}
since
\begin{align*}
  \int_{-|G|}^y \vpran{\ln \frac{|G| - z}{y-z}}^2 \dz
\leq
  2 \int_{-|G|}^y (\ln (|G| - z))^2 \dz
  + 2 \int_{-|G|}^y (\ln (y-z))^2 \dz
\leq
  4 \int_0^{2|G|} (\ln z)^2 \dz < \infty \,.
\end{align*}
The $L^\infty$-bound of $\int_{v \cdot G < y} Q_0(y, v) \dv$ follows in a similar way and the details are omitted. Combining these two parts, we obtain that $\vint{Q_0} \in L^\infty(-|G|, \, |G|)$.

\smallskip

\Ni \underline{{\bf  $Q_0$ is Lipschitz continuous away from the diagonal.}}  We now prove continuity of $Q_0$ and thus we  also obtain that $Q_0=0$ for $|y| = |G|$ except for the diagonal points $v\cdot G= \pm  |G|$. Such behaviour of $Q_0$ is depicted in the numerical results in Section~\ref{sec:numerics}. The proof for continuity uses the representation formula for the solution. We re-write the $Q_0$-equation as
\begin{align*}
   \del_y \vpran{e^{\int_{-|G|}^y \frac{\Lambda_0(z) - 1}{v \cdot G - z} \dz} Q_0(y)} 
= e^{\int_{-|G|}^y \frac{\Lambda_0(z) - 1}{v \cdot G - z} \dz}
   \frac{\Lambda_0(y)}{v \cdot G - y} \vint{Q_0} (y)\,.
\end{align*}
Away from the diagonal $\{v\cdot G = y\}$, the exponential term is smooth in $v$ and continuous in $y$, and the right-hand side is in $L^\infty_{loc} (\!\dv\dy)$, which implies that $Q_0(y, v)$ is Lipschitz continuous on every compact set in $\R  \times \VV \setminus \{v \cdot G = y \}$.

\smallskip

\Ni \underline{\bf Strict positivity of $\vint{Q_0}$ and $Q_0$} \,   Again we first deal with the values of $v$ and $y$ such that $y<v\cdot G$. Using inequality~\eqref{prop2Q-0} one has 
\begin{align*}
\int_{v\cdot G>y} Q_0(y,v) \dv &\geq 
 \lambda_1 \int_{ z=- |G|}^y \int_{v\cdot G>y} \frac{(v\cdot G - y)^{\lambda_2-1}} { (v \cdot G -z)^{\lambda_2}} \dv \; \vint{Q_0} (z) \dz
  \, .
\end{align*}
As before we  begin with the inner integral and estimate it, for $z\leq y$, as 
\begin{align*}
  \int_{v\cdot G>y} \frac{(v\cdot G - y)^{\lambda_2-1}} { (v \cdot G -z)^{\lambda_2}} \dv
&\geq 
  c_d \int_{v\cdot G>y} (v\cdot G - y)^{\lambda_2-1} \dv
\\
&= c_d \int_{y}^{|G|} (w - y)^{\lambda_2-1}  \vpran{|G|^2 - |w|^2}^{\frac{d-2}{2}} \dw
\geq
  c_d \vpran{|G| - y}^{\lambda_2 + \frac{d-2}{2}} \,,
\end{align*}
where $c_d$ is again a constant depending on $d, \; G$, $\lambda_1$, $\lambda_2$ which changes from line to line.
This yields
\[
\int_{v\cdot G>y} Q_0(y,v) dv \geq c_d (|G|-y)^{\lambda_2 + \frac{d-2}{2}}  \int_{- |G|}^y  \vint{Q_0} (z) dz \, .
\]
The same calculation, for $y>v \cdot G $ gives, 
\[
\int_{v\cdot G < y} Q_0(y,v) dv \geq c_d (|G| + y)^{\lambda_2 + \frac{d-2}{2}}  \int_{ y}^{|G|}  \vint{Q_0} (z) dz \, .
\]
Finally we arrive at 
\begin{align}\label{positivityaver}
\vint{Q_0} \geq c_d (|G| - |y| )^{\lambda_2 + \frac{d-2}{2}}  \int_{ - |G|}^{|G|}  \vint{Q_0} (z) dz = c_d (|G| - |y| )^{\lambda_2 + \frac{d-2}{2}} \, .
\end{align}
The strict positivity of $Q_0$ then follows from the lower bound on $\vint{Q_0}$ and \eqref{prop2Q-0}.

\smallskip

\Ni \underline{\bf Upper bound for $Q_0$}  Using the upper bound on $\vint{Q_0}$ and \eqref{prop3Q-0}, we conclude that for $y< v\cdot G $
\begin{align*} 
 Q_0 \leq \lambda_2  \norm{\vint{Q_0}}_{L^\infty}    (v\cdot G - y)^{\lambda_1-1} \int_{ - |G|}^y   (v\cdot G -z)^{-\lambda_1} dz =
   \frac{\lambda_2}{1-\lambda_1} \norm{\vint{Q_0}}_{L^\infty} \vpran{\vpran{\frac{v \cdot G + |G|}{v \cdot G - y}}^{1-\lambda_1} - 1}
\end{align*}
and a similar inequality for $y > v\cdot G $ concludes the points (c) and (d).

\smallskip


\smallskip

\Ni \underline{{\bf Uniqueness, $v\cdot G$ dependency  and symmetry}} \, Finally, we show that  a probability solution to~\eqref{eq:Q-0-general} must be unique. Suppose instead there are two probability solutions, denoted as $Q_1$ and $Q_2$.  From the steps above, these are $L^1$ functions. To avoid the difficulty that  $Q_1$, $Q_2$ vanish near the boundary $y = \pm G$, we introduce
\begin{align*}
   Q_3 = \frac{1}{2} \vpran{Q_1 + Q_2} \,.
\end{align*}
The main advantage of $Q_3$ is the uniform boundedness given by
\begin{align*}
   \frac{Q_1}{Q_3} \leq 2 \,.
\end{align*}
Such bound is elusive a-priori for $Q_1/Q_2$, which renders integration by parts involving $Q_1/Q_2$ invalid. Note that by the linearity of~\eqref{eq:Q-0-general}, $Q_3$ is also a normalized non-negative solution which satisfies all the properties shown above. In particular, $Q_3$ is strictly positive in $\big((-|G|, \, |G|) \times \VV\big)$. The uniqueness is shown by a similar argument as proving the positivity of $D_{0, 2}$ in Theorem~\ref{thm:front-asymp}. In particular, to make use of the entropy-type estimate, we multiply $Q_1/Q_3$ to the $Q_1$-equation and integrate in $(y, v)$. Since $Q_1, Q_3$ are both solutions to~\eqref{eq:Q-0-general}, we apply the same estimate as in the proof of Lemma~\ref{lem:positivity-entropy} and obtain that
\begin{align*}
   \int_{-|G|}^{|G|} \int_\VV \Lambda(y) Q_3(y, v) 
   \vpran{\frac{Q_1}{Q_3} - \vint{\frac{Q_1}{Q_3}}}^2 \dv\dy = 0 \,.
\end{align*}
Therefore, $Q_1/Q_3$ is independent of $v$. Hence there exists a function $\gamma(y)$ such that
\begin{align*}
    Q_1 = \gamma(y) Q_3 \,.
\end{align*}
Inserting such relation in the $Q_1$-equation gives
\begin{align*}
   \del_y \vpran{(v \cdot G - y) \gamma(y) Q_3} 
= \gamma(y) \Lambda(y) \vpran{\vint{Q_3} - Q_3} \,.
\end{align*}
We can now use the $Q_3$-equation to derive that
\begin{align*}
    \gamma'(y) (v \cdot G - y) Q_3 = 0 \,.
\end{align*}
This implies that $\gamma$ is a constant function in $y$. If we denote it as $\gamma_0$, then $Q_1 = \gamma_0 Q_3$. By the normalization conditions for both $Q_1, Q_3$, we get $Q_1 = Q_3$, which further implies that $Q_1 = Q_2$, therefore the uniqueness.

\smallskip

From uniqueness, the symmetry in~\eqref{symm-Q-0} follows immediately since $Q_0(-y, -v)$ is also a non-negative and normalized solution to~\eqref{eq:Q-0-general}. To show that $Q_0$ depends only on $v\cdot G$, we remark that the  solution in $d=1$, with $v_1$ the direction of $G$ can be extended to a solution in $d$ dimension in $v$ (independent of the orthogonal directions to $v_1$) and this provides the unique $d$-dimensional solution. 
\end{proof}


\section{Numerical illustration on $Q_0$}\label{sec:numerics}


We now numerically illustrate the properties established previously on the solution $Q_0$ of equation~\eqref{eq:Q-0-general} with $d=1$ which means $\VV = (-1,1)$. 
We also make the connection with the coefficients found for the continuum FLKS limits and compute 
 the flux $c_{0,2}$ defined by (\ref{def:c-0-2}).

In order to calculate $c_{0,2}$, we first solve the leading order equation ~\eqref{eq:Q-0-general} to obtain $Q_0$, and then we solve the following equation of the next order by using $Q_0(y,v)$, i.e.,
\begin{equation}\label{eq_inh}
	\partial_y((vG-y)h)=\Lambda_0(y)(<h>-h)+\Lambda_1(y)(<Q_0>-Q_0),
\end{equation}
with the constraint
\begin{equation}\label{eq_subh}
	\int_{-\infty}^\infty\int_{-1}^1h(y,v)dvdy=0.
\end{equation}
The flux $c_{0,2}$ is finally obtained by the integration of the solution $h(y,v)$ such that
\begin{equation}\label{eq_c0_h}
c_{0,2}=\frac{1}{2}\int_{-\infty}^\infty\int_{-1}^1	h(y,v)dvdy.
\end{equation}
We note that in the case when $\Lambda_0(y)$ is a constant, i.e., $\Lambda_0(y)=\bar \Lambda_0$, the flux $c_{0,2}$ is directly calculated from $Q_0$ by
\begin{equation}\label{eq_c0_Q}
\begin{split}
c_{0,2}=-\frac{1}{2\bar \Lambda_0}\int_{-\infty}^\infty \int_{-1}^1v\Lambda_1(y)Q_0(y,v)dvdy.
\end{split}
\end{equation}
%

\subsection{Numerical Scheme}
We consider the lattice mesh system on the domain $[-G,G] \times [-1,1] \subset \mathrm{R}^2$ such that
\begin{equation}
y_i=-G+ i\Delta y,\quad (i=0,\cdots,2I),
\end{equation}
\begin{equation}
v_j=-1+j\Delta v,\quad (j=0,\cdots,2 J),
\end{equation}
with $\Delta y=G/I$ and $\Delta v=1/J$.

We choose the mesh system such that for each $y_i$, there exists a mesh point on the diagonal $y=vG$. More precisely, $J/I$ is an integer. Denote the mesh point on the diagonal by $(y_{i_*},v_{j_*})$. For each $v_j$, depending on if $y_i\leq v_jG$ or $y_i\geq v_jG$, the discretizations at the grid point $(y_i,v_j)$ are different. The details are as following.  

For fixed $j$,
when $y_i\le v_jG$, equation~\eqref{eq:Q-0-general} is discretized by using the first- and second-order backward difference scheme , i.e., 
\begin{subequations}\label{eq_backdiff}
\begin{align}
&\frac{W_{1,j}Q_{1,j}-W_{0,0}Q_{0,j}}{\Delta y}=\Lambda_0^1\left(<Q>_1-Q_{1,j}\right),\\ 
&\frac{3W_{i,j}Q_{i,j}-4W_{i-1,j}Q_{i-1,j}+W_{i-2,j}Q_{i-2,j}}{2\Delta y}=\Lambda_0^i \left(<Q>_i-Q_{i,j}\right)\quad \mathrm{for}\quad y_2\le y_i\le v_jG,
\end{align}
\end{subequations}
and for $y_i\ge v_jG$, using the first- and second-order forward difference scheme, i.e.,
\begin{subequations}\label{eq_forwdiff}
\begin{align}
&\frac{W_{I,j}Q_{I,j}-W_{I-1,j}Q_{I-1,j}}{\Delta y}
=\Lambda_0^{I-1}\left(<Q>_{I-1}-Q_{I-1,j}\right),\\
&\frac{-3W_{i,j}Q_{i,j}+4W_{i+1,j}Q_{i+1,j}-W_{i+2,j}Q_{i+2,j}}{2\Delta y}=\Lambda_0^i \left(<Q>_i-Q_{i,j}\right)\quad \mathrm{for}\quad v_jG\le y_i\le y_{I-2},
\end{align}
\end{subequations}
with the boundary condition
\begin{equation}\label{eq_bound}
Q_{0,j}=Q_{2I,j}=0.	
\end{equation}
Here, we write $Q_{i,j}=Q_0(y_i,v_j)$, $<Q>_i=<Q_0>(y_i)$, $\Lambda_0^i=\Lambda_0(y_i)$, and
\begin{equation}\label{eq_W}
	W_{i,j}=v_j G-y_i.
\end{equation}
%

At each mesh point on the diagonal ($y_{i_*}$,$v_{j_*}$), we calculate two values for $Q_{i_*,j_*}$ by using equation~(\ref{eq_backdiff}) and (\ref{eq_forwdiff}) with letting $W_{i_*,j_*}Q_{i_*,j_*}=0$ on each eqaution.
The values calculated at the left side of the diagonal by (\ref{eq_backdiff}), say 
$Q^L_{i_*}$, are used in the integration $\int_{v_{j_*}}^{1} Q_0(y_i,v')dv'$ and the values calculated at the right side of the diagonal by (\ref{eq_forwdiff}), say $Q^R_{i_*}$, are used in the integration $\int_{-1}^{v_{j_*}}Q_0(y_i,v')dv'$.

Thus, we calculate $<Q>_i$ ($i=1,\cdots,2I-1$) as
\begin{equation}\label{simpson_qy}
<Q>_i=\frac{\Delta v}{4}
\left(
 Q_{i,0}+2\sum_{j=1}^{j_*-1}Q_{i,j}+Q^R_i
+Q^L_i+2\sum_{j=j_*+1}^{2J-1}Q_{i,j}+Q_{i,2J}
 \right),
\end{equation}
and $<Q>_0=<Q>_{2I}=0$.
Hereafter, the trapezoidal rule is used for the integration with respect to $v$ while Simpson's rule is used for the integration with respect to $y$.
Thus, for example, the integration $\overline{Q}=\int_{-G}^G <Q_0>(y)dy$ is calculated as
\begin{equation}\label{simpson_q}
\overline{Q}=\frac{\Delta y}{3}\left(<Q>_0+4\sum_{i=1}^{I}<Q>_{2i-1}+2\sum_{i=1}^{I-1}<Q>_{2i}+<Q>_{2I}\right).
\end{equation}

To obtain the solution of Eqs.~(\ref{eq_backdiff})--(\ref{eq_bound}), we consider the following time-evolution semi-implicit scheme,
\begin{subequations}\label{eq_evol_backward}
\begin{align}
&\frac{Q^{n+1}_{1,j}-Q^n_{1,j}}{\Delta t}=
-\frac{W_{1,j}Q^{n+1}_{1,j}-W_{0,j}Q^{n+1}_{0,j}}{\Delta y}+\Lambda_0^1\left(<Q>^n_1-Q_{1,j}^{n+1}\right),\\
&\frac{Q^{n+1}_{i,j}-Q^n_{i,j}}{\Delta t}=
-\frac{3W_{i,j}Q^{n+1}_{i,j}-4W_{i-1,j}Q^{n+1}_{i-1,j}+W_{i-2,j}Q^{n+1}_{i-2,j}}{2\Delta y}+\Lambda_0^i\left(<Q>^n_i-Q^{n+1}_{i,j}\right),
\end{align}
\end{subequations}
for $y_1\le y_i\le v_j G$ with a uniform initial condition.
This scheme uses a lower diagonal matrix which solves $Q^{n+1}_{i,j}$ very quickly.
For $v_j G\le y_i\le y_{2I-1}$, we replace the backward difference in (\ref{eq_evol_backward}) with the forward difference (\ref{eq_forwdiff}), i.e.,
\begin{subequations}\label{eq_evol_forward}
\begin{align}
&\frac{Q^{n+1}_{2I-1,j}-Q^n_{2I-1,j}}{\Delta t}=
-\frac{W_{I,j}Q^{n+1}_{2I,j}-W_{2I-1,j}Q^{n+1}_{2I-1,j}}{\Delta y}+\Lambda_0^{2I-1}\left(<Q>^n_{2I-1}-Q^{n+1}_{2I-1,j}\right),\\
&\frac{Q^{n+1}_{i,j}-Q^n_{i,j}}{\Delta t}=
-\frac{-3W_{i,j}Q^{n+1}_{i,j}+4W_{i+1,j}Q^{n+1}_{i+1,j}-W_{i+2,j}Q^{n+1}_{i+2,j}}{2\Delta y}+\Lambda_0^i\left(<Q>^n_i-Q^{n+1}_{i,j}\right),
\end{align}
\end{subequations}
which solves $Q^{n+1}_{i,j}$ by using the upper diagonal matrix.

At each time step, we calculate $\overline{Q}^n$ by equation (\ref{simpson_q}) and normalize the solution of the time-evolution scheme (\ref{eq_evol_backward}) and (\ref{eq_evol_forward}), say $\tilde Q^n_{i,j}$, by
\begin{equation}
	Q^n_{i,j}=\tilde Q^n_{i,j}/\overline{Q}^n,
\end{equation}
in order to satisfy the normalized condition $\int_{-G}^G\int_{-1}^1Q(y,v)dydv=1$.

We repeat the above process until the $Q_{i,j}^n$ satisfy the following convergence condition 
\begin{equation}\label{eq_conv}
\sum_{i,j}|Q_{i,j}^n-Q_{i,j}^{n-100}|\Delta v\Delta y < 10^{-10}.
\end{equation}

After we obtain the numerical solution $Q_{i,j}$, we solve equation (\ref{eq_inh}) by using the same time-evolution scheme as (\ref{eq_evol_backward}) and (\ref{eq_evol_forward}) with the inhomogeneous term $\Lambda_1^i(<Q>_i-Q_{i,j})$.
In order to satisfy the condition (\ref{eq_subh}), we correct the solution of the time-evolution scheme at each time step, say $\tilde h^n_{i,j}$, by 
\begin{equation}\label{eq_correct_h}
	h^n_{i,j}=\tilde h^n_{i,j}-rQ_{i,j},
\end{equation}
where $r$ is calculated as $r=\int_{-G}^G\int_{-1}^1\tilde h^n(y,v)dydv$.

\subsection{Numerical Result}

We carry out the numerical computation of  (\ref{eq_evol_backward})--(\ref{eq_correct_h}) when the tumbling functions are written as,
\begin{equation}\label{eq_lambda0}
\Lambda_0(y)=\bar \Lambda_0(1-\chi \tanh(y)),
\end{equation}
 and 
 \begin{equation}\label{eq_lambda1}
 \Lambda_1(y)=-\tanh(y).
 \end{equation}
Here, $\bar \Lambda_0$ is the mean tumbling rate and $\chi$ is the modulation amplitude of $\Lambda_0(y)$.

\begin{table}[htbp]
\centering
	\begin{tabular}{c cc c cc}
	\hline\hline
	        & \multicolumn{2}{c}{$\bar \Lambda_0$=2.0} & &\multicolumn{2}{c}{$\bar \Lambda_0$=0.5}\\
	        \cline{2-3} \cline{5-6}
		$I$ & $\frac{|c^I_{0,2}-c^*_{0,2}|}{c_{0,2}^*}$ & $\displaystyle \max_{i}|<Q>^I_i-<Q>^*_i|$ &
		 & $\frac{|c^I_{0,2}-c^*_{0,2}|}{c_{0,2}^*}$ & $\displaystyle \max_{i}|<Q>^I_i-<Q>^*_i|$ \\
		\hline
		200 & 9.1$\times 10^{-5}$ & 1.2$\times 10^{-4}$ && 8.1$\times 10^{-4}$ & 4.2$\times 10^{-2}$\\
		400 & 2.2$\times 10^{-5}$ & 2.9$\times 10^{-5}$ && 5.8$\times 10^{-4}$ & 2.5$\times 10^{-2}$\\
		800 & 5.4$\times 10^{-6}$ & 6.9$\times 10^{-6}$ && 3.4$\times 10^{-4}$ & 1.3$\times 10^{-2}$\\
		1600 & 1.1$\times 10^{-6}$ & 1.4$\times 10^{-6}$ && 1.5$\times 10^{-4}$ & 5.5$\times 10^{-3}$\\
		\hline\hline
	\end{tabular}
\caption{
The numerical accuracy for $c_{0,2}$ calculated by equation~(\ref{eq_c0_Q}) and $<Q>_i$ for different values of $\bar \Lambda_0=2.0$ and 0.5, where the tumbling rate $\Lambda_0$ is considered, i.e., $\chi=0$ in equation~(\ref{eq_lambda0}).
The number of mesh intervals $\Delta v$ and the time step size $\Delta t$ are set as $J=I$ and $\Delta t=\Delta y/2G$, respectively.
The parameter of the external chemical gradient $G=1.0$ is fixed.
The reference quantities $c_{0,2}^*$ and $<Q>_i^*$ are the results obtained with $I=3200$.
}\label{t_accuracy}
\end{table}
In the numerical computation, we set the time-step size as $\Delta t=\Delta y/2G$ and the numbers of mesh interval $\Delta y$ and $\Delta v$ as $I=J$.
Table \ref{t_accuracy} shows the accuracy of the numerical results for various mesh systems in the case when $\Lambda_0$ is constant, i.e., $\Lambda_0=\bar\Lambda_0$ and $\chi=0$.
It is seen that the flux $c_{0,2}$ and the moment $<Q_0>$ converge against the mesh interval for both $\bar \Lambda_0$=2.0 and 0.5.
The accuracy of $c_{0,2}$ is approximately second order for $\bar \Lambda_0$=2.0 while it is approximately first order for $\bar \Lambda_0$=0.5.
We also compare the results of $c_{0,2}$ obtained by (\ref{eq_c0_h}) and by (\ref{eq_c0_Q}).
The relative differences of the two values are 1.5$\times 10^{-6}$ for $\bar \Lambda_0=2$ and 2.2$\times 10^{-2}$ for $\bar \Lambda_0=0.5$ in the fine mesh system with $I=3200$.
In the following of this section, we show the numerical results obtained in the mesh system with $I=1600$.

\begin{figure}[tb]
\centering
\includegraphics{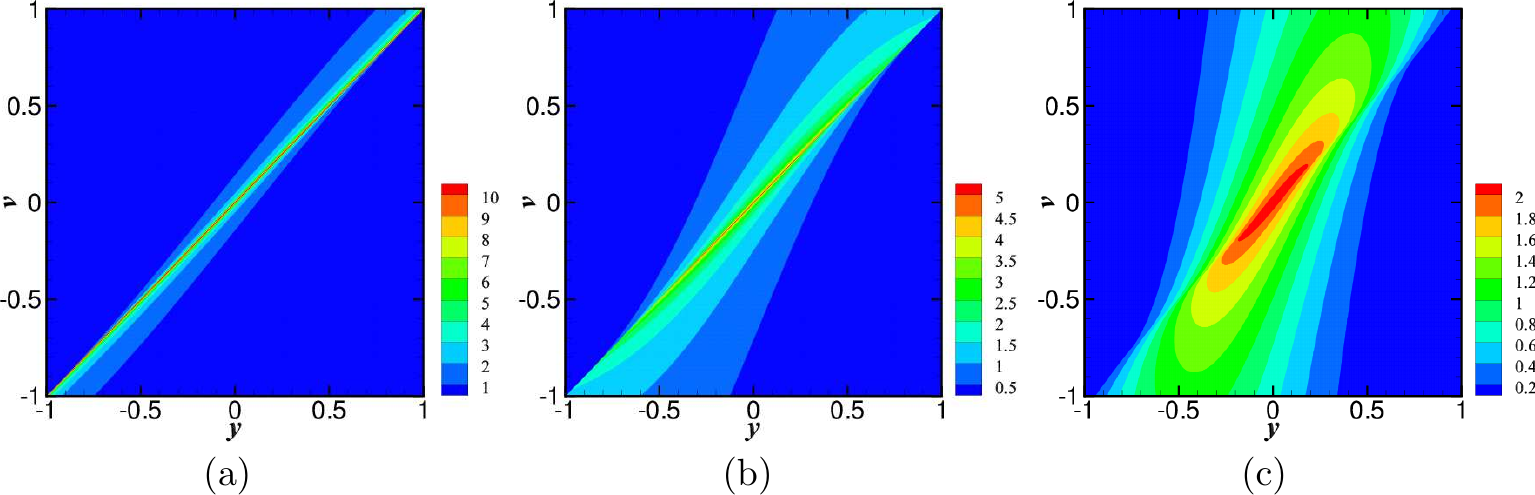}
\caption{
Distribution of $Q_0(y,v)$ for various values of $\bar \Lambda_0$, i.e., $\bar \Lambda_0=0.5$ in (a), $\bar \Lambda_0=1.0$ in (b), and $\bar \Lambda_0=2.0$ in (c).
The modulation amplitude $\chi$=0 and chemical gradient $G=1$ are fixed.
}\label{fig_2dmap}
\end{figure}
Figures \ref{fig_2dmap} shows the numerical results of $Q_0$   for different values of $\bar \Lambda_0$ when the tumbling rate $\Lambda_0(y)$ is constant, i.e, $\chi=0$.
It is seen that the distributions are origin symmetry, i.e., $Q(y,v)=Q(-y,-v)$ because the tumbling rate $\Lambda_0$ is independent of the internal state $y$.
As the tumbling rate $\bar \Lambda_0$ decreases, $Q_0$ becomes more and more concentrated at the diagonal $y=vG$.

\begin{figure}[tb]
\centering
\includegraphics[width=15cm]{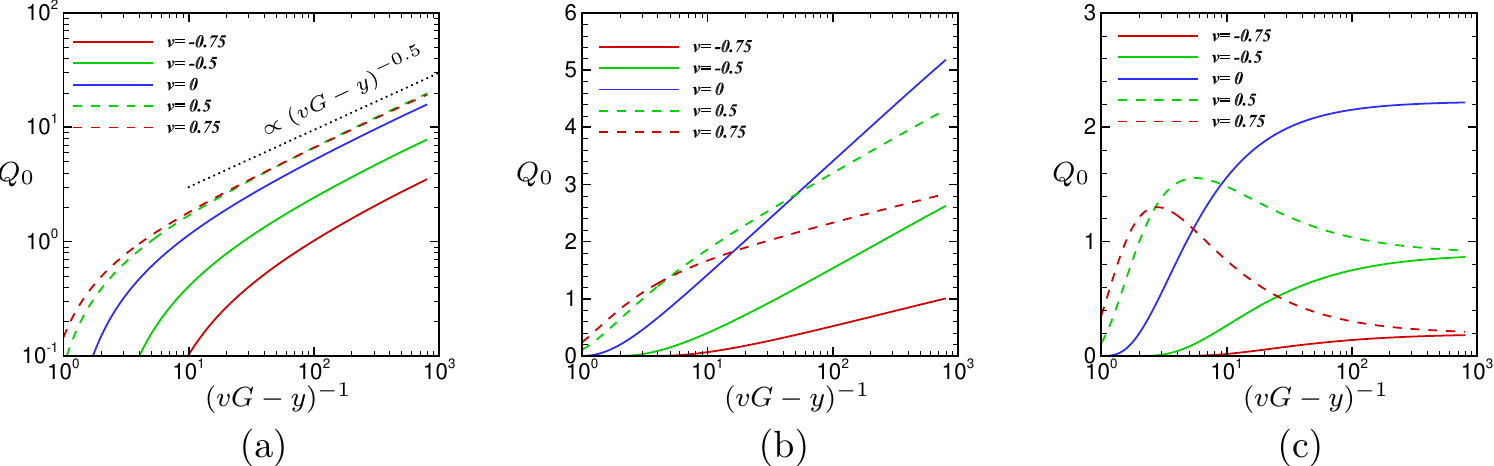}
\caption{
The blow-up behaviors near the diagonal for $vG-y>0$.
The distributions of $Q_0(y,v)$ against $(vG-y)^{-1}$ when the velocity $v$ is fixed are plotted for $\bar \Lambda_0$=0.5 in (a), $\bar \Lambda_0$=1.0 in (b), and $\bar \Lambda_0$=2.0 in (c).
The modulation amplitude $chi$=0 and chemical gradient $G$=1 are fixed.
Figure (a) is shown in the double logarithmic while Figures (b) and (c) are shown in the single logarithmic.
Note that the distributions for $v$=0.5 and 0.75 (dashed lines) correspond to those for $v=-0.5$ and $-0.75$ in the domain $vG-y<0$, respectively, due to the symmetry $Q_0(y,v)=Q_0(-y,-v)$.
}\label{fig_blowup}
\end{figure}

Figure \ref{fig_blowup} shows the one-dimensional distribution of $Q_0(y,v)$ against $(vG-y)^{-1}$ in the domain $vG-y>0$ when the velocity $v$ is fixed.
It is clearly seen that $Q_0(y,v)$ blows up at the diagonal for $\bar \Lambda_0\le 1$; the rates of the blowup are $(vG-y)^{\bar \Lambda_0-1}$ for $\bar \Lambda_0=0.5$ and logarithmic for $\bar \Lambda_0=1.0$ as is established in Theorem~\ref{thm:Q-0-general}.
On the other hand, for $\bar Lambda_0$=2.0, $Q_0$ is continuous at the diagonal $y=vG$.
This is seen because the curves for $v$=0.75 and 0.5 (dashed lines) correspond to those for $v=-0.75$ and $-0.5$ in $vG-y<0$, respectively, due to the symmetry and the curves with the same values of $|v|$ converge to the same values at the diagonal $y=vG$.

Figure \ref{fig_decay} shows the decay behavior of $Q_0(y,v)$ near $y=-G$ for a fixed velocity $v$.
These plots illustrate the behavior $Q_0(y,v)=(|G|-|y|)^\alpha$ near $y=-G$ with $\alpha>1$.
The power $\alpha$ is measured as $\alpha=1.6$ for $\bar \Lambda_0=0.5$, $\alpha=1.8$ for $\bar \Lambda_0=1.0$, and $\alpha=2.6$ for $\bar \Lambda_0=2.0$.
Thus, the power increases with $\bar \Lambda_0$.
All these numerical results illustrate the properties of $Q_0(y,v)$ established in Theorem~\ref{thm:Q-0-general}.

\begin{figure}[tb]
\centering
\includegraphics[width=15cm]{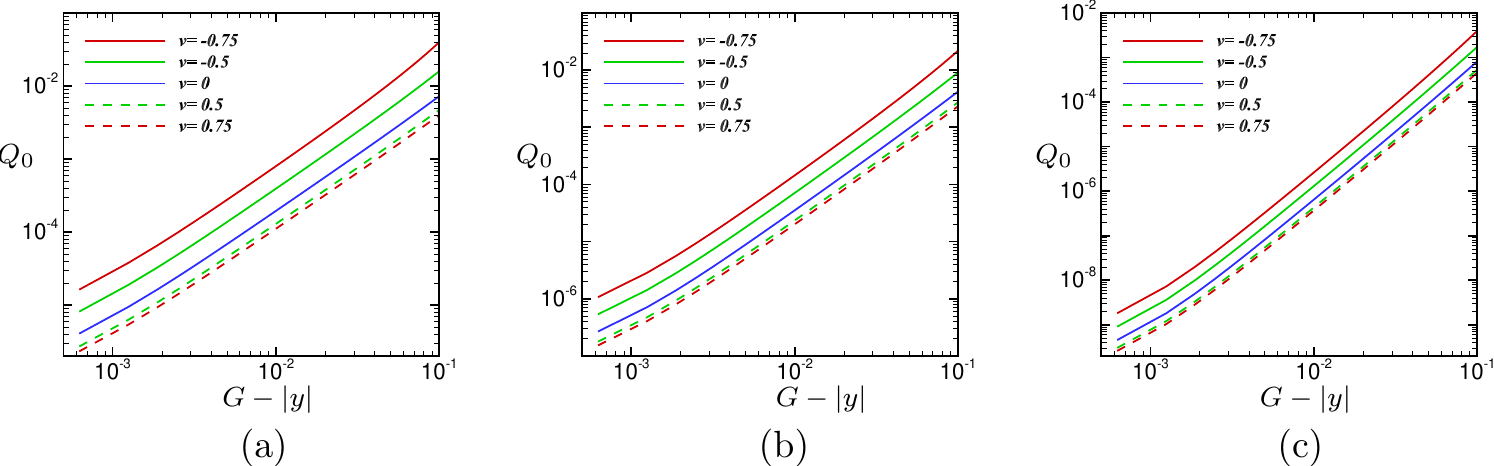}
\caption{
The decay behavior of $Q_0$ near $y=-G$.
The distributions of $Q_0(y,v)$ against $G-|y|$ when the velocity $v$ is fixed are plotted for $\bar \Lambda_0$=0.5 in (a), $\bar \Lambda_0$=1.0 in (b), and $\bar \Lambda_0$=2.0 in (c).
The modulation amplitude $chi$=0 and chemical gradient $G$=1 are fixed.
These plots illustrate the behavior $Q_0(y,v)=(|G|-|y|)^\alpha$ with $\alpha>1$ and $\alpha$ increases with $\bar \Lambda_0$.
}\label{fig_decay}
\end{figure}

\begin{figure}[tb]
\centering
\includegraphics[width=12cm]{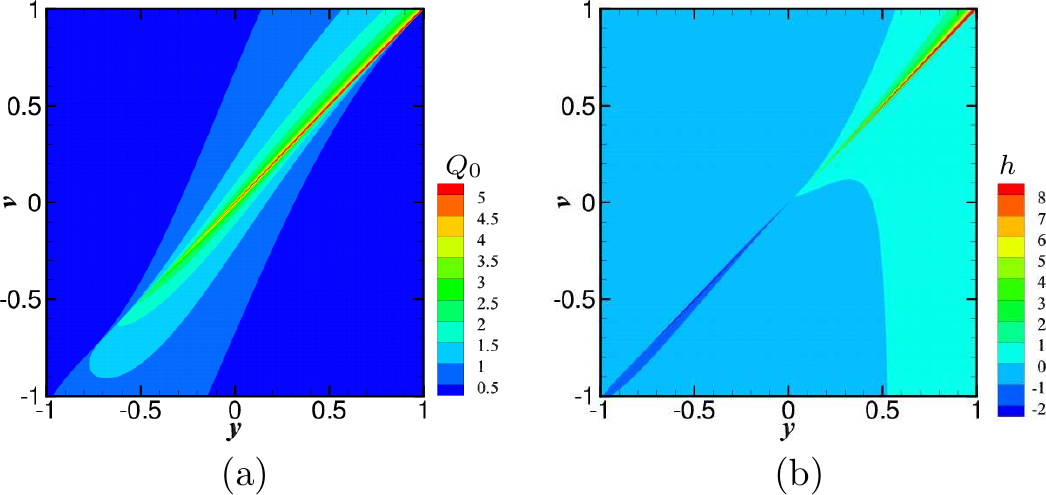}
\caption{
Distributions of $Q_0(y,v)$ in (a) and $h(y,v)$ in (b) when $\Lambda_0$ depends on $y$.
The mean tumbling rate $\bar \Lambda_0$=1 and chemical gradient $G$=1 are fixed.
}\label{fig_h_2dmap}
\end{figure}

Figure \ref{fig_h_2dmap} shows the distributions of $Q_0(y,v)$ and $h(y,v)$ when the tumbling rate $\Lambda_0$ depends on the internal state $y$.
It is clearly seen that the distributions are not anymore symmetric but rather concentrated along the diagonal $y=vG$ in $y>0$.

Finally, we show the drift velocity $c_{0,2}$ against the chemical gradient $G$ in Figure~\ref{fig_c0}.
Figure~\ref{fig_c0}(a) shows the results for various values of $\bar \Lambda_0$ when the tumbling rate is constant, i.e., $\chi$=0, while figure~\ref{fig_c0}(b) shows the results for various values of modulation $\chi$, where the tumbling rate $\Lambda_0$ is not constant.
In both cases, when the gradient is small, say $G\lesssim 1$, the fluxes $c_{0,2}$ are almost linearly proportional to the gradient, $c_0\propto G$.
However, they saturate for $G\gtrsim 10$ and approach to the constant values.
These results illustrate the boundedness of the drift velocity $c_{0,2}$ when the tumbling rate $\Lambda_0$ is constant.
Unexpectedly, we can also observe non-monotonic profiles of $c_{0,2}$ against the chemical gradient $G$ when the modulation is large, i.e., $\chi$=0.5 and 0.8.

\begin{figure}[tb]
\centering
\includegraphics[width=12cm]{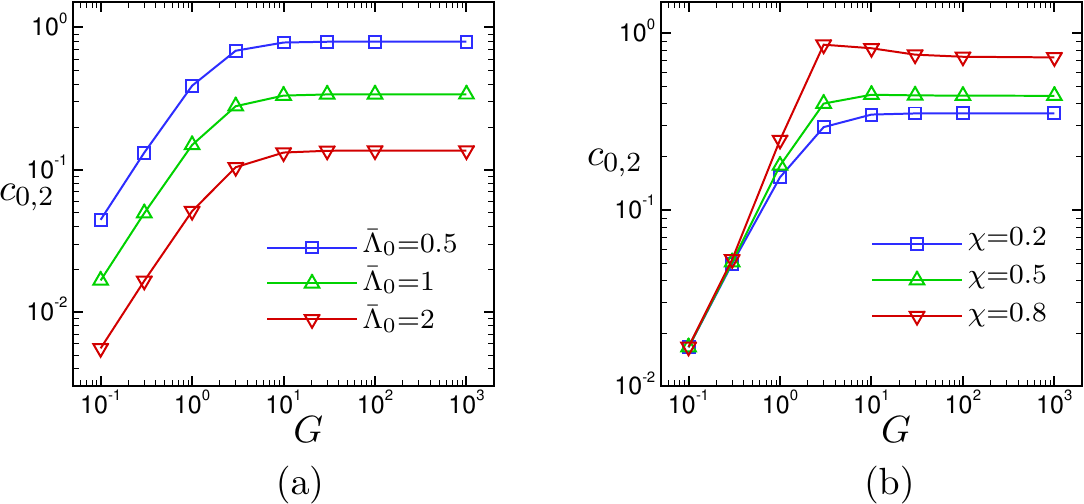}
\caption{
The drift velocity $c_{0,2}$ vs. the chemical gradient $G$.
The left figure (a) shows the results for various values of $\bar\Lambda_0$ when $\chi$=0 is fixed, where the tumbling rate $\Lambda_0(y)$ is constant.
The right figure (b) shows the results for various values of $\chi$ when $\bar\Lambda_0$=1 is fixed, where the tumbling rate $\Lambda_0(y)$ depends on the internal state $y$.
}\label{fig_c0}
\end{figure}

\section{Conclusion}

We have systematically studied scaling limits of kinetic equations which describe run-and-tumble movement of bacteria with internal chemical pathways. The complexity of the phenomena stems form the  different scales between space, time and velocity as well as the stiffness of the tumbling response and methylation adaptation. The question in these different asymptotic limits is to distinguish between the standard Keller-Segel equation and the flux-limitied Keller-Segel.

It appears that stiff-response,  a physically appropriate regime, always leads to the FLKS equation. In particular this  conclusion applies to fast adaptation and stiff response which corresponds to the measured paprameters for {\it E. coli}. 

The dominating profile, given by the function $Q_0(y,v)$, is also remarkable with a possible blow-up along the diagonal $y =v |G|$. However, the physically relevant regime is when $\Lambda_0 >1$ and then $Q_0(y,v)$ is  smooth. 

Several other possible scalings are still possible. Also we have ignored noise in the internal state, which can be a route to mathematically interesting analysis.

\appendix

\section{Well-posedness of the corrector equation~\eqref{eq:h}}
\label{appendix:well-posed}

We show that the equation~\eqref{eq:h}  for the corrector has a solution using the Fredholm theory. More generally, we consider the equation with a source term $R_1$ such that
\begin{align} \label{eq:h-general}
   \del_y \vpran{(v \cdot G - y) h} 
= \Lambda(y) (\vint{h} - h) + R_1(y, v) \,,
\qquad
   h(\pm |G|, v) = 0 \,. 
\end{align}

The well-posedness theorem states
\begin{thm} \label{thm:Fredholm}
Let $Q_0$ be the unique non-negative solution given in Theorem~\ref{thm:Q-0-general}. Suppose the function $R_1$ satisfied that $R_1 \in L^1((-|G|, \, |G|) \times \VV)$ and 
\begin{align} \label{cond:R-1}
   \int_\VV\int_{-|G|}^{|G|} R_1(y, v) \dy\dv = 0 \,,
\qquad
\end{align} 
and there exist constants $\mu_0 > -1$ and $c_0 > 0$ such that 
\begin{align} \label{bound:R-1}
   |R_1| \leq c_0 \vpran{\abs{v \cdot G - y}^{\mu_0} + 1} \,, 
\qquad
   (y, v) \in (-|G|, \, |G|) \times \VV \,.
\end{align}
Then equation~\eqref{eq:h-general} has a unique solution $h \in L^1((-|G|, \, |G|) \times \VV)$ such that 
\begin{align} \label{cond:h-orthog}
   \int_\VV\int_{-|G|}^{|G|} h(y, v) \dy\dv = 0 \,.
\end{align}

\end{thm}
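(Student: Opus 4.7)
The strategy is to realize \eqref{eq:h-general} as an equation $T_0 h = R_1$ for the operator defined in \eqref{def:T} and to apply the Fredholm alternative on the spaces $X = \{h \in L^1((-|G|,|G|)\times \VV) : \iint h \dy\dv = 0\}$ and $Y = \{R \in L^1((-|G|,|G|)\times \VV) : \iint R \dy\dv = 0\}$. Indeed, integrating $T_0 h$ over $(y,v)$ gives zero (the $y$-derivative integrates to zero thanks to compact support, and $\vint{h} - h$ integrates to zero in $v$), which is precisely the cokernel obstruction; and the uniqueness of $Q_0$ established in Theorem~\ref{thm:Q-0-general} identifies the kernel of $T_0$ as $\Span\{Q_0\}$. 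The constraint \eqref{cond:h-orthog}, together with $\iint Q_0 = 1$, singles out a unique representative modulo the kernel.

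For \textbf{uniqueness}, suppose $h_1, h_2 \in X$ both solve \eqref{eq:h-general}. Then $w = h_1 - h_2$ satisfies $T_0 w = 0$ with $\iint w \dy\dv = 0$. I would adapt the argument at the end of the proof of Theorem~\ref{thm:Q-0-general}: since $w/Q_0$ need not be bounded near the diagonal $\{y = v\cdot G\}$ nor near $y = \pm|G|$, replace direct division by a symmetrized denominator $Q_\star = \tfrac{1}{2}(Q_0 + \widetilde Q_0)$ built from $|w|$ so that $|w|/Q_\star \le 2$ and integrations by parts are legitimate. Applying Lemma~\ref{lem:positivity-entropy} to $w$ with $Q_\star$ in the denominator forces $w/Q_\star$ to be $v$-independent; substituting $w = \gamma(y)Q_\star$ into $T_0 w = 0$ and using the equation satisfied by $Q_\star$ gives $\gamma'(y)(v\cdot G - y)Q_\star = 0$, hence $\gamma$ is constant, $w \in \Span\{Q_0\}$, and the zero-mean constraint closes the case.

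For \textbf{existence}, I mimic the time-evolution construction of $Q_0$: solve
\begin{equation*}
   \Eps \del_t h_\Eps + T_0 h_\Eps = R_1, \qquad h_\Eps\big|_{t=0} = 0,
\end{equation*}
by classical theory. The orthogonality hypothesis \eqref{cond:R-1} implies $\frac{d}{dt}\iint h_\Eps \dy\dv = 0$, so the zero-mean constraint is preserved. Multiplying by the cutoff $\phi(y)$ introduced in Theorem~\ref{thm:Q-0-general} and using $\phi'(y)(v\cdot G - y) \le 0$ shows $h_\Eps$ stays supported in $[-|G|,|G|]\times\VV$. The candidate solution is then extracted by time-averaging $\bar h_\Eps(T) = \frac{1}{T}\int_0^T h_\Eps \dt$ and taking $T \to \infty$ followed by $\Eps \to 0$, combined with weak-$*$ compactness of measures on the compact support.

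The \textbf{main obstacle} is promoting the resulting measure limit to a genuine $L^1$ function and ruling out concentration on the diagonal, which is exactly where the growth condition \eqref{bound:R-1} with $\mu_0 > -1$ plays its role. I would multiply the stationary equation by $(v\cdot G - y)^{-\lambda}$ for $\lambda = \lambda_1$ and $\lambda = \lambda_2$ as in the derivations of \eqref{prop2Q-0}--\eqref{prop3Q-0}, producing a Duhamel-type inequality
\begin{equation*}
   |v\cdot G - y|^{1-\lambda_1}|h(y,v)| \lesssim \int_{-|G|}^{y} |v\cdot G - z|^{-\lambda_1}\bigl(\Lambda_0(z)\vint{|h|}(z) + |R_1(z,v)|\bigr)\dz,
\end{equation*}
where $\mu_0 > -1$ guarantees integrability of the $R_1$ contribution near $z = v\cdot G$. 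The bootstrap from $L^2$ to $L^\infty$ on $\vint{|h|}$ carried out for $Q_0$ then transfers verbatim, absence of mass on the diagonal is shown by the same decomposition $h = H + \mu$ as in Theorem~\ref{thm:Q-0-general}, and uniqueness (already established) identifies the limit independently of the extraction.
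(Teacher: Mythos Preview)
Your approach is genuinely different from the paper's, and while the overall strategy (kernel/cokernel identification plus Fredholm) is sound in spirit, two of your key steps do not close.

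\textbf{Uniqueness.} The symmetrized denominator $Q_\star = \tfrac12(Q_0 + \widetilde Q_0)$ ``built from $|w|$'' is not well-defined. Lemma~\ref{lem:positivity-entropy} uses in an essential way that the denominator solves the homogeneous equation, so that the cross terms collapse. But $|w|$ is not a solution of $T_0(\cdot)=0$ (the positive and negative parts of a signed solution do not individually satisfy the equation), and by the uniqueness in Theorem~\ref{thm:Q-0-general} the only non-negative solutions are multiples of $Q_0$; hence any admissible $Q_\star$ is again a multiple of $Q_0$ and the bound $|w|/Q_\star\le 2$ is exactly the estimate you were trying to avoid. The trick $Q_3=\tfrac12(Q_1+Q_2)$ in Theorem~\ref{thm:Q-0-general} works only because both $Q_1,Q_2$ are themselves non-negative solutions.

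\textbf{Existence.} For the time-evolution $\Eps\del_t h_\Eps + T_0 h_\Eps = R_1$ you assert weak-$\ast$ compactness of $\{\bar h_\Eps(T)\}$, but you never produce a bound on $\|h_\Eps(t)\|_{L^1}$ that is uniform in $t$. In the homogeneous construction of $Q_0$ this came for free from mass conservation and non-negativity; here the best the $L^1$ estimate gives is linear growth $\|h_\Eps(t)\|_{L^1}\lesssim t\,\|R_1\|_{L^1}/\Eps$, so the time average is not controlled as $T\to\infty$. Obtaining a uniform bound is essentially equivalent to a spectral-gap statement for $T_0$ on the zero-mean subspace, which is what you are trying to prove.

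The paper circumvents both issues by \emph{reducing to a one-dimensional problem}. Using the integrating-factor representation~\eqref{eq:h-integ-positive}--\eqref{eq:h-integ-negative}, $h$ is expressed explicitly in terms of $\vint{h}$ and $R_1$; averaging in $v$ yields a scalar integral equation $\vint{h}=K\vint{h}+R$ on $(-|G|,|G|)$. The kernel of $K$ is shown to be Hilbert--Schmidt on $L^2$ (so compactness is concrete), $\Null(\Id-K^\ast)=\Span\{\Lambda\}$ is computed by an explicit normalization identity, and the orthogonality $\int R\,\Lambda\,\rd y=0$ follows from~\eqref{cond:R-1}. The Fredholm alternative then gives $\vint{h}\in L^2$, after which $h$ is reconstructed from the representation formula; the growth hypothesis~\eqref{bound:R-1} with $\mu_0>-1$ is used only to place $R\in L^\infty$. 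Your Duhamel/bootstrap paragraph is in fact the right ingredient for this last regularity step, but it does not by itself supply the compactness or the a~priori bounds you need for the preceding ones.
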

\begin{proof}

Re-write~\eqref{eq:h-general} as
\begin{align*}
   (v \cdot G - y) \del_y h + (\Lambda(y) - 1) h 
= \Lambda(y) \vint{h} + R_1 \,.
\end{align*}
Then for $v \cdot G - y > 0$, we have
\begin{align*}
   \del_y \vpran{e^{\int_{-|G|}^y \frac{\Lambda(z) - 1}{v \cdot G - z} \dz} h} 
= e^{\int_{-|G|}^y \frac{\Lambda(z) - 1}{v \cdot G - z} \dz}
   \frac{\Lambda(y)}{v \cdot G - y} \vint{h}   + e^{\int_{-|G|}^y \frac{\Lambda(z) - 1}{v \cdot G - z} \dz}
      \frac{1}{v \cdot G - y} R_1 \,,
\end{align*}
which gives 
\begin{align} \label{eq:h-integ-positive}
   h(y, v)
&= \int_{-|G|}^y 
   e^{-\int_z^y \frac{\Lambda(w) - 1}{v \cdot G - w} \dw}
   \frac{1}{v \cdot G - z}   \vpran{\Lambda(z)
\vint{h}(z) + R_1(z, v)} \dz 
\qquad
   v \cdot G - y > 0 \,.
\end{align}
Similarly, if $v \cdot G - y < 0$, then
\begin{align*}
   \del_y \vpran{e^{\int_y^{|G|} \frac{\Lambda(z) - 1}{z - v \cdot G} \dz} h} 
= - e^{\int_y^{|G|} \frac{\Lambda(z) - 1}{z - v \cdot G} \dz}
   \frac{\Lambda(y)}{y - v \cdot G} \vint{h}
   - e^{\int_y^{|G|} \frac{\Lambda(z) - 1}{z - v \cdot G} \dz}
      \frac{1}{y - v \cdot G} R_1 \,,
\end{align*}
which gives 
\begin{align} \label{eq:h-integ-negative}
   h(y, v)
&= \int_y^{|G|}
   e^{-\int_y^z \frac{\Lambda(w) - 1}{w - v \cdot G} \dw}
   \frac{1}{z - v \cdot G} 
   \vpran{\Lambda(z)\vint{h}(z) + R_1(z, v)}\dz 
\qquad
   v \cdot G - y < 0 \,.
\end{align}
Using~\eqref{eq:h-integ-positive} and \eqref{eq:h-integ-negative}, we obtain the integral equation for $\vint{h}$ as
\begin{align*}
   \vint{h}(y)
&= \frac{1}{2} \int_{\VV} \int_{-G}^y \One_{v \cdot G > y}
   e^{-\int_z^y \frac{\Lambda(w) - 1}{v \cdot G - w} \dw}
   \frac{1}{v \cdot G - z} 
   \vpran{ \Lambda(z) \vint{h}(z) + R_1(z, v)} \dz
\\
& \quad \, 
   + \frac{1}{2} \int_{\VV} \int_y^{|G|} \One_{v \cdot G < y}
   e^{-\int_y^z \frac{\Lambda(w) - 1}{w - v \cdot G} \dw}
   \frac{1}{z - v \cdot G} 
   \vpran{\Lambda(z) \vint{h}(z) + R_1(z, v)}\dz \,.
\end{align*}
Denote $K$ as the linear operator such that
\begin{align*}
   K \vint{h}
&= \frac{1}{2} \int_{\VV} \int_{-G}^y \One_{v \cdot G > y}
   e^{-\int_z^y \frac{\Lambda(w) - 1}{v \cdot G - w} \dw}
   \frac{\Lambda(z)}{v \cdot G - z} 
   \vint{h}(z) \dz\dv
\\
& \quad \, 
   + \frac{1}{2} \int_{\VV} \int_y^{|G|} \One_{v \cdot G < y}
   e^{-\int_y^z \frac{\Lambda(w) - 1}{w - v \cdot G} \dw}
   \frac{\Lambda(z)}{z - v \cdot G} 
   \vint{h}(z) \dz\dv \,.
\end{align*}
Denote the source term as
\begin{align} \label{def:R}
   R(y)
&= \frac{1}{2} \int_{\VV} \int_{-|G|}^y \One_{v \cdot G > y}
   e^{-\int_z^y \frac{\Lambda(w) - 1}{v \cdot G - w} \dw}
   \frac{1}{v \cdot G - z} 
   R_1(z, v) \dz\dv   \nn
\\
& \quad \, 
   + \frac{1}{2} \int_{\VV} \int_y^{|G|} \One_{v \cdot G < y}
   e^{-\int_y^z \frac{\Lambda(w) - 1}{w - v \cdot G} \dw}
   \frac{1}{z - v \cdot G} 
   R_1(z, v) \dz\dv \,.
\end{align}
Using the bound of $R_1$ in~\eqref{bound:R-1} and the estimate, for $v \cdot G - y > 0$,
\begin{align} \label{bound:expo-1}
   e^{\int_y^z \frac{\Lambda(w) - 1}{v \cdot G - w} \dw}
   \frac{1}{v \cdot G - z}
\leq
   \frac{1}{v \cdot G - y}
   e^{-\lambda_1 \int_z^y \frac{1}{v \cdot G - w} \dw}
= \frac{(v \cdot G - y)^{\lambda_1 - 1}}{(v \cdot G - z)^{\lambda_1}} \, , 
\end{align} 
we bound the first term in $R$ as
\begin{align*}
& \quad \,
   \abs{\frac{1}{2} \int_{\VV} \int_{-|G|}^y \One_{v \cdot G > y}
   e^{-\int_z^y \frac{\Lambda(w) - 1}{v \cdot G - w} \dw}
   \frac{1}{v \cdot G - z} 
   R_1(z, v) \dz\dv }
\\
&\leq
   c_d \int_\VV \int_{-|G|}^y \One_{v \cdot G > y}
   \frac{(v \cdot G - y)^{\lambda_1-1}}{(v \cdot G - z)^{\lambda_1}}
   \vpran{\vpran{v \cdot G - z}^{\mu_0} + 1} \dz \dv
\\
& \leq
    c_d \int_\VV \int_{-|G|}^y \One_{v \cdot G > y}
   \frac{(v \cdot G - y)^{\lambda_1-1}}{(v \cdot G - z)^{\lambda_1 - \mu_0}} \dz \dv + c_d
\qquad  \text{by~\eqref{est:integral}}
\end{align*}
Similar as the estimates in~\eqref{est:integral}, we have
\begin{align*}
& \quad \,
   \int_\VV \One_{z<y} \One_{v \cdot G > y}
   \frac{(v \cdot G - y)^{\lambda_1-1}}{(v \cdot G - z)^{\lambda_1 - \mu_0}} \dv
=  c_d \One_{z<y}  \int_{y/|G|}^1 
   \frac{(w |G| - y)^{\lambda_1-1}}{(w |G| - z)^{\lambda_1 - \mu_0}} \dw
\\
& \leq \frac{c_d}{\lambda_1 |G|} \One_{z<y} 
       \vpran{\frac{(w |G| - y)^{\lambda_1}}{(w |G| - z)^{\lambda_1 - \mu_0}} \Big|^1_{y/|G|}
       + \abs{\lambda_1 - \mu_0} |G|
          \int_{y/|G|}^1 \frac{(w |G| - y)^{\lambda_1}}{(w |G| - z)^{\lambda_1 - \mu_0 + 1}} \dw}
\\
& \leq
   c_d \vpran{(|G| - z)^{\mu_0} 
       + \abs{\lambda_1 - \mu_0} |G|
          \int_{y/|G|}^1 \frac{1}{(w |G| - z)^{- \mu_0 + 1}} \dw}
\\
& \leq  
   c_d (|G| - z)^{\mu_0}  \,,
\end{align*}
where again the constant $c_d$ may change from line to line. Since $\mu_0 > -1$, the first term in $R$ is bounded as
\begin{align*}
   \abs{\frac{1}{2} \int_{\VV} \int_{-|G|}^y \One_{v \cdot G > y}
   e^{-\int_z^y \frac{\Lambda(w) - 1}{v \cdot G - w} \dw}
   \frac{1}{v \cdot G - z} 
   R_1(z, v) \dz\dv }
\leq
   c_d \int_{-|G|}^{|G|} (|G| - z)^{\mu_0} \dz
< \infty \,.
\end{align*}
Similar estimates hold for the second term in $R$. Overall we have 
\begin{align*}
    R \in L^\infty(-|G|, \, |G|) \subseteq L^2(-|G|, \, |G|) \,.
\end{align*}
Moreover, $\vint{h}$ satisfies
\begin{align} \label{eq:vint-h}
   \vint{h} = K \vint{h} + R \,.
\end{align}
We will show that the operator $\Id - K$ is Fredholm and $R \in (\Null(\Id - K^\ast))^\perp$ where the orthogonality is taken in $L^2(-|G|, |G|)$. To this end, we first find the kernel of $K$ as
\begin{align*}
   k(y, z)
 = \frac{1}{2} \One_{z < y} \int_\VV \One_{v \cdot G > y}
   e^{-\int_z^y \frac{\Lambda(w) - 1}{v \cdot G - w} \dw}
   \frac{\Lambda(z)}{v \cdot G - z} \dv
   + \frac{1}{2} \One_{z > y} \int_\VV \One_{v \cdot G < y}
   e^{-\int_y^z \frac{\Lambda(w) - 1}{w - v \cdot G} \dw}
   \frac{\Lambda(z)}{z - v \cdot G} \dv \,,
\end{align*}
and the kernel of the adjoint operator $K^\ast$ is
\begin{align*}
   k^\ast (y, z) 
= \frac{1}{2} \One_{y < z} \int_\VV \One_{v \cdot G > z}
   e^{-\int_y^z \frac{\Lambda(w) - 1}{v \cdot G - w} \dw}
   \frac{\Lambda(y)}{v \cdot G - y} \dv
   + \frac{1}{2} \One_{y > z} \int_\VV \One_{v \cdot G < z}
   e^{-\int_z^y \frac{\Lambda(w) - 1}{w - v \cdot G} \dw}
   \frac{\Lambda(y)}{y - v \cdot G} \dv  \,.
\end{align*}
In order to show that $K$ is compact on $L^2(-|G|, \, |G|)$ we prove that $k \in L^2(\dy\dz)$ so that $K$ is a Hilbert-Schmidt operator. We only present the details for the $L^2$-bound of the first term in $k(y, v)$, for the bound of the second term follows in a similar way. 
Hence, by~\eqref{bound:expo-1} and the upper/lower bounds of $\Lambda$, we have
\begin{align*}
   \frac{1}{2} \One_{z < y} 
   \int_\VV \One_{v \cdot G > y}
   e^{-\int_z^y \frac{\Lambda(w) - 1}{v \cdot G - w} \dw}
   \frac{\Lambda(z)}{v \cdot G - z} \dv
&\leq
   \frac{1}{2} \One_{z < y}  
   \lambda_2 \int_\VV \One_{v \cdot G > y}
    \vpran{\frac{v \cdot G - y}{v \cdot G - z}}^{\lambda_1} 
    \frac{1}{v \cdot G - y} \dv 
\\
&\leq  
   \lambda_2 c_d \One_{z < y} 
   \int_{y/|G|}^1 
    \frac{\vpran{w|G| - y}^{\lambda_1 - 1}}
           {\vpran{w|G| - z}^{\lambda_1}} \dw \,.
\\
& \leq
   \lambda_2 c_d \One_{z < y}
   \vpran{\frac{1}{\lambda_1 |G|} + \frac{1}{|G|} \ln \frac{|G| - z}{y - z}}
\in L^2(\dy\dz) \,,
\end{align*}
where the last step follows from~\eqref{est:integral}. Hence $K$ is compact on $L^2(-|G|, \, |G|)$.

\smallskip

Next, we show that $R \in \vpran{\Null (\Id - K^\ast)}^{\perp}$. The exponential term in $k^\ast$ can be simplified as
\begin{align*}
    e^{-\int_y^z \frac{\Lambda(w) - 1}{v \cdot G - w} \dw}
= e^{-\int_y^z \frac{\Lambda(w)}{v \cdot G - w} \dw}
   e^{\int_y^z \frac{1}{v \cdot G - w} \dw}
= e^{-\int_y^z \frac{\Lambda(w)}{v \cdot G - w} \dw}
   \frac{v \cdot G - y}{v \cdot G - z}
\end{align*}
and similarly,
\begin{align*}
   e^{-\int_z^y \frac{\Lambda(w) - 1}{w - v \cdot G} \dw}
= e^{-\int_z^y \frac{\Lambda(w)}{w - v \cdot G} \dw}
   \frac{y - v \cdot G}{z - v \cdot G} \,.
\end{align*}
Hence, $k^\ast$ becomes
\begin{align*}
   k^\ast(y, z)
= \frac{1}{2} \One_{y < z} \int_\VV \One_{v \cdot G > z}
   e^{-\int_y^z \frac{\Lambda(w)}{v \cdot G - w} \dw}
   \frac{\Lambda(y)}{v \cdot G - z} \dv
   + \frac{1}{2} \One_{y > z} \int_\VV \One_{v \cdot G < z}
   e^{-\int_z^y \frac{\Lambda(w)}{w - v \cdot G} \dw}
   \frac{\Lambda(y)}{z - v \cdot G} \dv \,.
\end{align*}
The structure of the null space of $\Id - K^\ast$ can be seen by working with a modified kernel
\begin{align} \label{cond:normalization-k-ast}
    \tilde k^\ast(y, z)
= \frac{1}{\Lambda(y)} k^\ast (y, z) \Lambda(z) \,.
\end{align}
We claim that for any $y \in (-|G|, |G|)$, $\tilde k^\ast$ satisfies the normalization such that
\begin{align*}
  \int_{-|G|}^{|G|} \tilde k^\ast(y, z) \dz = 1 \,.
\end{align*}
Indeed, by using the spherical coordinates, the first term in $\int_{-|G|}^{|G|} \tilde k^\ast(y, z) \dz$ satisfies 
\begin{align*}
& \quad \,
   \frac{1}{2} \int_{-|G|}^{|G|} \One_{y < z} \int_\VV \One_{v \cdot G > z}
   e^{-\int_y^z \frac{\Lambda(w)}{v \cdot G - w} \dw}
   \frac{\Lambda(z)}{v \cdot G - z} \dv \dz
\\
&= \frac{1}{2} \int_{y}^{|G|} \int_{\Ss^{d-1}}
    \int_{z/|G|}^1 e^{-\int_y^z \frac{\Lambda(w)}{\mu |G| - w} \dw}
   \frac{\Lambda(z)}{\mu |G| - z} J(\mu, z) \dmu {\rm d}\hat v \dz
\\
&= \frac{1}{2} \int_{\Ss^{d-1}}\int_{y}^{|G|} 
    \int_{z/|G|}^1 e^{-\int_y^z \frac{\Lambda(w)}{\mu |G| - w} \dw}
   \frac{\Lambda(z)}{\mu |G| - z} J(\mu, z) \dmu \dz {\rm d}\hat v 
\\
& = \frac{1}{2} \int_{\Ss^{d-1}} \int_{y/|G|}^1 \int_{y}^{\mu |G|}
      e^{-\int_y^z \frac{\Lambda(w)}{\mu |G| - w} \dw}
   \frac{\Lambda(z)}{\mu |G| - z} J(\mu, z) \dz \dmu {\rm d}\hat v \,,
\end{align*} 
where $J$ is the Jacobian in the spherical coordinates. The last inner integral is evaluated as
\begin{align*}
   \int_{y}^{\mu |G|}
      e^{-\int_y^z \frac{\Lambda(w)}{\mu |G| - w} \dw}
   \frac{\Lambda(z)}{\mu |G| - z} \dz
&= \lim_{A \to \mu |G|}
   \int_{y}^{A}
      e^{-\int_y^z \frac{\Lambda(w)}{\mu |G| - w} \dw}
   \frac{\Lambda(z)}{\mu |G| - z} \dz
\\
& = 1 - \lim_{A \to \mu |G|} e^{-\int_y^A \frac{\Lambda(w)}{\mu |G| - w} \dw}
  = 1 \,.
\end{align*}
Hence, the first term in $\int_{-|G|}^{|G|} \tilde k^\ast(y, z) \dz$ is reduced to
\begin{align*}
      \frac{1}{2} \int_{-|G|}^{|G|} \One_{y < z} \int_\VV \One_{v \cdot G > z}
   e^{-\int_y^z \frac{\Lambda(w)}{v \cdot G - w} \dw}
   \frac{\Lambda(z)}{v \cdot G - z} \dv \dz
= \frac{1}{2} \int_{\Ss^{d-1}} \int_{y/|G|}^1 J(\mu, z) \dmu {\rm d}\hat v \,.
\end{align*}
Similarly, the second integral in $\int_{-|G|}^{|G|} \tilde k^\ast(y, z) \dz$ can be simplified as
\begin{align*}
   \frac{1}{2} \int_{-|G|}^{|G|} \One_{y > z} \int_\VV \One_{v \cdot G < z}
   e^{-\int_z^y \frac{\Lambda(w)}{w - v \cdot G} \dw}
   \frac{\Lambda(y)}{z - v \cdot G} \dv \dz
= \frac{1}{2} \int_{\Ss^{d-1}} \int^{y/|G|}_{-1} J(\mu, z) \dmu {\rm d}\hat v \,.
\end{align*}
Summing up gives the normalization condition~\eqref{cond:normalization-k-ast} for $\tilde k^\ast$. Therefore, if we denote $\tilde K^\ast$ as the operator with kernel $\tilde k^\ast$, then the normalization condition guarantees that the only elements in $\Null (\Id - \tilde K^\ast)$ are constants, which further implies that 
\begin{align*}
   \Null (\Id - K^\ast) = \Span \{\Lambda(y)\} \,.
\end{align*}
Hence, we have the equivalence such that
\begin{align} \label{cond:orthog-R}
   R \in \vpran{\Null (\Id - K^\ast)}^{\perp}
\Longleftrightarrow
   \int_{-|G|}^{|G|} R(y) \Lambda(y) \dy = 0 \,.
\end{align}
Now we check such orthogonality condition indeed holds. This can be done through a direct computation using the definition of $R$ given in~\eqref{def:R}. We can also show it by observing that $R = \vint{w}$ where $w$ is the solution to 
\begin{align} \label{eq:w}
   \del_y \vpran{(v \cdot G - y) w} = -\Lambda(y) w + R_1 \,,
\qquad
  w(\pm |G|, v) = 0 \,.
\end{align}
Then~\eqref{cond:orthog-R} holds by integrating~\eqref{eq:w} in $y$ and using condition~\eqref{cond:R-1} for $R_1$. 

Combining the orthogonality and the Fredholm property of $\Id - K$, we deduce that~\eqref{eq:vint-h} has a solution in $L^2(-|G|, \, |G|) \cap \vpran{\Null(I - K)}^{\perp}$. Denote this solution as $H_1$ and define
\begin{align*} 
   h_1(y, v)
= \begin{cases}
      \int_{-|G|}^y 
   e^{-\int_z^y \frac{\Lambda(w) - 1}{v \cdot G - w} \dw}
   \frac{1}{v \cdot G - z} 
   \vpran{\Lambda(z) H_1(z) + R_1(z, v)} \dz 
\qquad
   v \cdot G - y > 0 \,, \\[3pt]
      \int_y^{|G|}
   e^{-\int_y^z \frac{\Lambda(w) - 1}{w - v \cdot G} \dw}
   \frac{1}{z - v \cdot G} 
   \vpran{\Lambda(z) H_1(z) + R_1(z, v)}\dz 
\qquad
   v \cdot G - y < 0 \,.
   \end{cases}
\end{align*}
Then 
\begin{align*}
   \vint{h_1}
&= \frac{1}{2} \int_{\VV} \int_{-G}^y \One_{v \cdot G > y}
   e^{-\int_z^y \frac{\Lambda(w) - 1}{v \cdot G - w} \dw}
   \frac{1}{v \cdot G - z} 
   \vpran{\Lambda(z)H_1(z) + R_1(z, v)} \dz
\\
& \quad \, 
   + \frac{1}{2} \int_{\VV} \int_y^{|G|} \One_{v \cdot G < y}
   e^{-\int_y^z \frac{\Lambda(w) - 1}{w - v \cdot G} \dw}
   \frac{1}{z - v \cdot G} 
   \vpran{\Lambda(z)H_1(z) + R_1(z, v)}\dz
\\
&= H_1 \,,
\end{align*}
which shows $h_1$ is the unique solution to~\eqref{eq:h-general} satisfying~\eqref{cond:h-orthog} with $\vint{h_1} \in L^2(-|G|, \, |G|)$.
\end{proof}

\bigskip

\Ni{\bf Acknowledgements:} BP and WS would like to express their gratitude to the  NSF research network grant RNMS11-07444 (KI-Net). WS would also like to thank Laboratoire Jacques-Louis Lions (LJLL) at Sorbonne University for their hospitality and Centre National de la Recherche Scientifique (CNRS) for providing generous support.

BP has received funding from the European Research Council (ERC) under the European Union's Horizon 2020 research and innovation programme (grant agreement No 740623). The research of WS~is supported in part by NSERC Discovery Individual Grant R611626. MT is partially supported by NSFC 11871340. SY has received funding from the Japan Society for the Promotion of Science (JSPS) under the KAKENHI grant No 16K17554.

\bibliographystyle{amsxport}

\bibliography{InternalBP}



\end{document}